\theoremstyle{theorem}
\newtheorem{theorem}{Theorem}[section]
\theoremstyle{corollary}
\newtheorem{corollary}{Corollary}[section]
\theoremstyle{lemma}
\newtheorem{lemma}{Lemma}[section]
\theoremstyle{definition}
\newtheorem{definition}{Definition}[section]
\theoremstyle{proof}
\theoremstyle{remark}
\newtheorem{remark}{Remark}[section]
\theoremstyle{example}
\newtheorem{example}{Example}[section]
\theoremstyle{observation}
\begin{document}
   \setcounter{Maxaffil}{2}
   \title{On the adjacency matrix of a complex unit gain graph}
   \author[a]{\rm Ranjit Mehatari\thanks{ranjitmehatari@gmail.com, mehatarir@nitrkl.ac.in}}
   \author[b]{\rm M. Rajesh Kannan\thanks{rajeshkannan1.m@gmail.com, rajeshkannan@maths.iitkgp.ac.in}}
\author[b]{\rm Aniruddha Samanta\thanks{aniruddha.sam@gmail.com}}
   \affil[a]{Department of Mathematics,}
   \affil[ ]{National Institute of Technology Rourkela,}
   \affil[ ]{Rourkela - 769008, India}
   \affil[ ]{ }
   \affil[b]{Department of Mathematics,}
   \affil[ ]{Indian Institute of Technology Kharagpur,}
   \affil[ ]{Kharagpur-721302, India.}
   \maketitle
\begin{abstract}
A complex unit gain  graph  is a simple graph in which each orientation of an edge is given a complex number with modulus $1$ and its inverse is assigned to the opposite orientation of the edge. In this article, first we establish bounds for the eigenvalues of the complex unit gain graphs. Then we study some of the properties of the adjacency matrix of complex unit gain graph in connection with the characteristic and the permanental polynomials. Then we establish spectral properties of the adjacency matrices of  complex unit gain graphs. In particular, using Perron-Frobenius theory,  we establish a characterization for bipartite graphs in terms of the set of eigenvalues of gain graph and the set of eigenvalues of the underlying  graph. Also, we derive an equivalent condition on the gain so that the eigenvalues of the gain graph and the eigenvalues of the underlying graph are the same.
\end{abstract}

{\bf AMS Subject Classification(2010):} 05C50, 05C22.

\textbf{Keywords:} Gain graph, Characteristic polynomial,  Perron-Frobenius theory,  Bipartite graph, Balanced gain graph.

\section{Introduction}
Let $G=(V,E)$ be a simple, undirected, finite graph with the vertex set $V(G)=\{v_1,v_2,\ldots,v_n\}$ and the edge set $E(G) \subseteq V \times V$.  If two vertices $v_i$ and $v_j$ are adjacent, we write $v_i\sim v_j$,   and the edge between them is denoted by $e_{ij}$. The degree of the vertex $v_i$ is denoted by $d_i$. The $(0,1)$-\textit{adjacency matrix}  or simply the \textit{adjacency matrix} of $G$ is an $n \times n$ matrix, denoted by $A(G)=[a_{ij}]$, whose rows and columns are indexed by the vertex set of the graph and the entries are defined by
$$a_{ij}=\begin{cases}
1, &\text{if }v_i\sim v_j,\\
0, &\text{otherwise.}\end{cases}$$

The adjacency matrix of a graph is one of the well studied matrix class  in the field of spectral graph theory. For more details about the study of classes of matrices associated with graphs, we refer to \cite{Bap-book, Brou, Chung, Cve2, Cve1, Mer}.

The notion of gain graph was introduced in \cite{Zas1}. For a given graph $G$ and a group $\mathfrak{G}$, first orient the edges of the graph $G$. For each oriented edge $e_{ij}$ assign a value (the \textit{gain} of the edge $e_{ij}$) $g$ from $\mathfrak{G}$ and assign $g^{-1 }$ to the orientated edge $e_{ji}$. If the group is taken to be the multiplicative group of unit complex numbers, the graph is called the \textit{complex unit gain graph}. Now let us recall the definition of complex unit gain graphs \cite{Reff1}. The set of all oriented edges of the graph $G$ is denoted by $\overrightarrow{E}(G)$.

\begin{definition}
    A $\mathbb{T}$-gain graph (or complex unit gain graph) is a triple $\Phi=(G,\mathbb{T},\varphi)$, where
    \begin{itemize}
        \item[(i)] $G=(V,E)$ is a simple finite graph,
        \item[(ii)] $\mathbb{T}$ is the unit complex circle, i.e., $\mathbb{T}=\{z\in\mathbb{C}:|z|=1\}$, and
        \item[(iii)] the map $\varphi:\overrightarrow{E}(G)\rightarrow\mathbb{T}$  is such that $\varphi(e_{ij})=\varphi(e_{ji})^{-1}$.
    \end{itemize}
    Since, we consider $\mathbb{T}$-gain graphs throughout this paper,  we use $\Phi=(G,\varphi)$ instead of $\Phi=(G,\mathbb{T},\varphi)$.
\end{definition}
 The study of the spectral properties of $\mathbb{T}$-gain graphs is interesting because this generalizes the theory of adjacency matrix for undirected graphs. In \cite{Reff1},  the author introduced the adjacency matrix $A(\Phi)=[a_{ij}]_{n\times n}$ for a $\mathbb{T}$-gain graph $\Phi$ and provided some important spectral properties of $A(\Phi)$. The entries of $A(\Phi)$ are given by
$$a_{ij}=\begin{cases}
\varphi(e_{ij}),&\text{if } \mbox{$v_i\sim v_j$},\\
0,&\text{otherwise.}\end{cases}$$
If $v_i$ is adjacent to $v_j$, then $a_{ij} = \varphi(e_{ij}) = \varphi(e_{ji})^{-1}
= \overline{\varphi(e_{ji})} = \overline{a_{ji}}$ . Thus the matrix $A(\Phi)$ is Hermitian, and its eigenvalues are real. Let $\sigma(A(\Phi))$ denote the set of eigenvalues of the matrix $A(\Phi)$.

Particular cases of the notion of adjacency matrix of  $\mathbb{T}$-gain graphs were considered  with different weights  in the literature \cite{Bap,Kat}. In \cite{Kat}, the authors considered complex weighted graphs with the weights taken from $\{\pm1,\pm i\}$, and characterized unicyclic graph having strong reciprocal eigenvalue property. In \cite{Ger}, the authors studied some of the properties of the characteristics polynomial for gain graphs. For some interesting spectral properties of gain graphs, we refer to \cite{Reff1, Reff2, Ger,  Zas2}. When $\varphi(e_{ij})=1$ for all $e_{ij}$, then $A(\Phi)=A(G)$. Thus we can consider $G$ as a $\mathbb{T}$-gain graph and we write this by $(G,1)$. Recently the notion of incident matrix and Laplacian matrix for $\mathbb{T}$-gain graphs have been studied \cite{Reff1, lap-gain}. From the above discussion it is clear that, the spectral theory of $\mathbb{T}$-gain graphs generalizes the spectral theory of undirected  graphs and some weighted graphs.

%In this paper, we compute the coefficients of the characteristics and permanental polynomials for complex unit gain graphs. We also provide the values of the determinant and permanent in terms of graph invariants. We extend some of the spectral properties of the adjacency matrix of undirected graphs.

Next we recall some of the  definitions and notation which we needed.  For more details we refer to \cite{Reff1,Reff2,Zas4,Zas1,Zas3,Zas2}.
\begin{definition}
    The \textit{gain} of a cycle (with some orientation) $C={v_1v_2\ldots v_lv_1}$, denoted by $\varphi(C)$,  is defined as the product of the gains of its edges, that is
    $$\varphi(C)=\varphi(e_{12})\varphi(e_{23})\cdots\varphi(e_{(l-1)l})\varphi(e_{l1}).$$
    A cycle $C$ is said to be \textit{neutral} if $\varphi(C)=1$, and a gain graph is said to be \textit{balanced} if all its cycles are neutral. For a cycle $C$ of $G$, we denote the real part of the gain of $C$ by $\mathfrak{R}(C)$, and it is independent of the orientation.
\end{definition}
\begin{definition}
    A function from the vertex set of $G$ to the  complex unit circle $\mathbb{T}$ is called a \textit{switching function}. We say that, two gain graphs $\Phi_1=(G,\varphi_1)$ and $\Phi_2=(G,\varphi_2)$ are \textit{switching equivalent}, written as $\Phi_1\sim\Phi_2$, if there is a switching function $\zeta:V\rightarrow\mathbb{T}$ such that $$\varphi_2(e_{ij})=\zeta(v_i)^{-1}\varphi_1(e_{ij})\zeta(v_j).$$

    The switching equivalence of two gain graphs can be defined in the following equivalent way:
    Two gain graphs $\Phi_1=(G,\varphi_1)$ and $\Phi_2=(G,\varphi_2)$ are switching equivalent, if there exists a diagonal matrix $D_\zeta$ with diagonal entries from $\mathbb{T}$, such that
    $$A(\Phi_2)=D_\zeta^{-1}A(\Phi_1)D_\zeta.$$
\end{definition}

\begin{definition}
    A \textit{potential function} for $\varphi$ is a function $\psi:V\rightarrow\mathbb{T}$, such that for each edge $e_{ij}$, $\varphi(e_{ij})=\psi(v_i)^{-1}\psi(v_j).$
\end{definition}

\begin{theorem}\cite{Zas1}\label{Zas1}
    Let $\Phi=(G,\varphi)$ be a $\mathbb{T}$-gain graph. Then the following statements are equivalent:
    \begin{itemize}
        \item[(i)] $\Phi$ is balanced,
        \item[(ii)] $\Phi\sim(G,1)$,
        \item[(iii)] $\varphi$ has a potential function.
    \end{itemize}
\end{theorem}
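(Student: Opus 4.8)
The plan is to prove the chain of implications $(iii)\Rightarrow(ii)\Rightarrow(i)\Rightarrow(iii)$, of which the first two are immediate from the definitions and only the last requires an actual construction. For $(iii)\Rightarrow(ii)$: if $\psi$ is a potential function for $\varphi$, then $\varphi(e_{ij})=\psi(v_i)^{-1}\psi(v_j)=\psi(v_i)^{-1}\cdot 1\cdot\psi(v_j)$, so $\psi$ itself is a switching function realizing $\Phi\sim(G,1)$; equivalently, with $D_\psi=\mathrm{diag}(\psi(v_1),\dots,\psi(v_n))$ one has $A(\Phi)=D_\psi^{-1}A(G)D_\psi$. For $(ii)\Rightarrow(i)$: from $\Phi\sim(G,1)$ we obtain a map $\zeta:V\to\mathbb{T}$ with $\varphi(e_{ij})=\zeta(v_i)^{-1}\zeta(v_j)$, and then for any cycle $C=v_{i_1}v_{i_2}\cdots v_{i_l}v_{i_1}$ the product $\varphi(C)=\prod_{k}\zeta(v_{i_k})^{-1}\zeta(v_{i_{k+1}})$ telescopes to $1$; hence every cycle is neutral and $\Phi$ is balanced.

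For the substantive implication $(i)\Rightarrow(iii)$, I would first reduce to the case that $G$ is connected, since a potential function can be constructed on each connected component separately and the pieces combine to a potential function on $G$. Assuming $G$ connected, fix a spanning tree $T$ with a root vertex $r$, set $\psi(r)=1$, and for every other vertex $v$ let $\psi(v)$ be the product of the gains of the edges along the unique $r$–$v$ path in $T$, each edge taken in the direction pointing away from $r$. By construction the relation $\varphi(e_{ij})=\psi(v_i)^{-1}\psi(v_j)$ holds on every tree edge. For a non-tree edge $e_{ij}$, adjoining it to $T$ creates a unique cycle $C$ consisting of $e_{ij}$ together with the tree path from $v_j$ back to $v_i$; balancedness gives $\varphi(C)=1$, and since the contribution of the tree path to this product is exactly $\psi(v_j)^{-1}\psi(v_i)$ after telescoping, we recover $\varphi(e_{ij})=\psi(v_i)^{-1}\psi(v_j)$. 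Hence $\psi$ is a potential function and $(iii)$ holds.

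The step I expect to require the most care is the orientation bookkeeping in that last computation: $\varphi(C)$ is defined as a product over one chosen cyclic traversal of $C$, whereas $\psi$ was defined via paths oriented away from the root, so along part of $C$ the edges are traversed against their ``tree direction'' and one must consistently replace $\varphi(e_{ij})$ there by $\varphi(e_{ji})^{-1}=\overline{\varphi(e_{ji})}$. Once the directions are tracked carefully the telescoping cancellation is routine. It is worth noting that the argument uses only neutrality of the fundamental cycles determined by $T$, which is a priori weaker than balancedness and provides a convenient finite certificate for $(i)$, although we do not need this refinement for the statement as phrased.
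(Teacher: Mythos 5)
Your proof is correct: the chain $(iii)\Rightarrow(ii)\Rightarrow(i)\Rightarrow(iii)$ is complete, the orientation bookkeeping in the spanning-tree construction is handled properly (the relation $\varphi(e_{ij})=\psi(v_i)^{-1}\psi(v_j)$ holds on tree edges regardless of which endpoint is closer to the root, and the fundamental-cycle computation for non-tree edges telescopes as you claim). The paper states this theorem as a citation to Zaslavsky without reproducing a proof, and your argument is essentially the standard one from that source, so there is nothing further to compare.
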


The following necessary condition for switching equivalence is known.
\begin{theorem}\cite{Reff1}
    \label{Th1}
    Let $\Phi_1=(G,\varphi_1)$ and $\Phi_2=(G,\varphi_2)$ be $\mathbb{T}$-gain graphs. If $\Phi_1\sim\Phi_2$, then $\sigma(A(\Phi_1))=\sigma(A(\Phi_2))$.
\end{theorem}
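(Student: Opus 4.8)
The plan is to invoke the matrix reformulation of switching equivalence and then appeal to the invariance of the spectrum under similarity. By the second description in the definition of switching equivalence, the hypothesis $\Phi_1\sim\Phi_2$ supplies a diagonal matrix $D_\zeta=\mathrm{diag}(\zeta(v_1),\ldots,\zeta(v_n))$ with each $\zeta(v_i)\in\mathbb{T}$ such that
\[
A(\Phi_2)=D_\zeta^{-1}A(\Phi_1)D_\zeta .
\]
First I would note that $D_\zeta$ is invertible: its diagonal entries lie on the unit circle, hence are nonzero, so $\det D_\zeta=\prod_i\zeta(v_i)\neq 0$ and in fact $D_\zeta^{-1}=\mathrm{diag}(\zeta(v_1)^{-1},\ldots,\zeta(v_n)^{-1})=D_\zeta^{*}$, so that $D_\zeta$ is unitary.

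Next, from the displayed identity the matrices $A(\Phi_1)$ and $A(\Phi_2)$ are similar. Since similar matrices have the same characteristic polynomial --- indeed $\det\big(\lambda I-A(\Phi_2)\big)=\det\big(D_\zeta^{-1}(\lambda I-A(\Phi_1))D_\zeta\big)=\det\big(\lambda I-A(\Phi_1)\big)$ --- they share the same set of eigenvalues together with multiplicities. In particular $\sigma(A(\Phi_1))=\sigma(A(\Phi_2))$, which is the claim. (The fact that the similarity is implemented by a unitary matrix also re-confirms that $A(\Phi_2)$ is Hermitian whenever $A(\Phi_1)$ is, consistent with the real-spectrum observation made earlier.)

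There is essentially no obstacle here: the only point requiring a word of care is the passage between the two equivalent formulations of switching equivalence and the verification that $D_\zeta$ is genuinely invertible, both of which are immediate from the definitions. I would therefore keep the proof to these few lines rather than expanding it further.
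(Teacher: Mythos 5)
Your proof is correct and is exactly the intended argument: the paper states this result as a citation from the literature without reproducing a proof, but its definition of switching equivalence already records the similarity $A(\Phi_2)=D_\zeta^{-1}A(\Phi_1)D_\zeta$, from which invariance of the spectrum is immediate. Nothing further is needed.
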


In Section \ref{spec-bip}, we construct a counter example to show that the above necessary condition is not sufficient.

The next theorem gives a necessary condition for the for two $\mathbb{T}$-gain graphs to be switching equivalent.
\begin{theorem}\cite{Reff2}
    \label{Th2}
    Let $\Phi_1=(G,\varphi_1)$ and $\Phi_2=(G,\varphi_2)$ be two $\mathbb{T}$-gain graphs. If $\Phi_1\sim\Phi_2$ , then  for any cycle $C$ in $G$, $\varphi_1(C)=\varphi_2(C)$ holds.
\end{theorem}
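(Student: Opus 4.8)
The plan is to unwind the definition of switching equivalence and exploit the cyclic (telescoping) structure of the product defining $\varphi(C)$. Since $\Phi_1\sim\Phi_2$, there is a switching function $\zeta:V\rightarrow\mathbb{T}$ with $\varphi_2(e_{ij})=\zeta(v_i)^{-1}\varphi_1(e_{ij})\zeta(v_j)$ for every edge $e_{ij}$. I would fix an arbitrary cycle $C=v_1v_2\ldots v_lv_1$ with a chosen orientation and simply substitute this relation into the product
\[
\varphi_2(C)=\varphi_2(e_{12})\varphi_2(e_{23})\cdots\varphi_2(e_{(l-1)l})\varphi_2(e_{l1}).
\]

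Next I would regroup the factors. Each vertex $v_k$ on the cycle is the head of exactly one traversed edge and the tail of exactly one traversed edge, so after substitution the factor $\zeta(v_k)$ appears exactly once with exponent $+1$ (coming from the edge entering $v_k$) and once with exponent $-1$ (coming from the edge leaving $v_k$). Because $\mathbb{T}$ is abelian, all these $\zeta$-factors cancel in pairs, and the product collapses to $\varphi_1(e_{12})\varphi_1(e_{23})\cdots\varphi_1(e_{l1})=\varphi_1(C)$. A clean way to present this is to write the index $k$ modulo $l$ and note
\[
\varphi_2(C)=\Big(\prod_{k=1}^{l}\zeta(v_k)^{-1}\zeta(v_{k+1})\Big)\prod_{k=1}^{l}\varphi_1(e_{k,k+1})=\varphi_1(C),
\]
with $v_{l+1}:=v_1$, since the first product is a telescoping product over a cycle and hence equals $1$.

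There is essentially no serious obstacle here; the only point requiring a little care is the bookkeeping of the cyclic indices so that the telescoping is genuinely closed up (it is a cycle, not a path), and the observation that the identity $\varphi(e_{ij})=\varphi(e_{ji})^{-1}$ makes $\varphi(C)$ independent of which representative orientation of each edge we pick, so the statement is well posed. One may also remark that this gives a quick independent proof that balance (all cycle gains equal to $1$) is a switching invariant, recovering the implication $(ii)\Rightarrow(i)$ direction compatibility with Theorem \ref{Zas1}.
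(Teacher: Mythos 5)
Your proof is correct. The paper itself gives no proof of this statement --- it is quoted from \cite{Reff2} as a known result --- so there is nothing to compare against; your telescoping argument is the standard one and it works. One small remark: you do not actually need to invoke commutativity of $\mathbb{T}$ and reshuffle all the factors. Writing the product in traversal order,
\[
\varphi_2(C)=\zeta(v_1)^{-1}\varphi_1(e_{12})\zeta(v_2)\cdot\zeta(v_2)^{-1}\varphi_1(e_{23})\zeta(v_3)\cdots\zeta(v_l)^{-1}\varphi_1(e_{l1})\zeta(v_1),
\]
the adjacent factors $\zeta(v_{k+1})\zeta(v_{k+1})^{-1}$ cancel immediately, leaving $\zeta(v_1)^{-1}\varphi_1(C)\zeta(v_1)$; this shows that for gains in an arbitrary group the cycle gain is preserved up to conjugacy, and equality follows because $\mathbb{T}$ is abelian. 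Your aside about independence of edge-orientation choices is not really needed (the traversal direction of the cycle fixes the orientation of each edge), but it does no harm.
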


\begin{definition}
    The \emph{characteristic polynomial} of a gain graph, denoted by $P_{\Phi}(x)$, is defined as $P_{\Phi}(x) = \det (x I - A(\Phi))= x^n+a_1x^{n-1}+\cdots+a_n$. The \emph{permanental polynomial} of a gain graph, denoted by $Q_{\Phi}(x)$, is defined as $Q_{\Phi}(x) =\text{per}(x I-A(\Phi))= x^n+b_1x^{n-1}+\cdots+b_n$. The characteristic and the permanental polynomials of the underlying graph $G$ is denoted by $P_{G}(x)$ and $Q_{G}(x)$, respectively. Some important and interesting properties of $P_G(x)$ and $Q_G(x)$ can be found in \cite{Cve1,Mer1}.
\end{definition}
\begin{definition}
    A \textit{matching} in a graph $G$ is a set of edges such that no two of them  have a vertex in common. {The number of edges in a matching is called the \emph{size of that matching}. The collection of all matchings of size $k$ in a graph $G$ is denoted by $\mathcal{M}_k(G)$. We define $m_k(G)=|\mathcal{M}_k(G)|$ with the convention that $m_0(G)=1$.}  A matching is called \textit{maximal} if it is not contained in any other matching. The largest possible cardinality among all matchings is called the \textit{matching number} of $G$.\end{definition}

Let $\mathbb{C}^{n \times n}$ denote the set of all $n \times n$ matrices with complex entries. For a matrix $A = (a_{ij}) \in \mathbb{C}^{n \times n}$, define $|A| = (|a_{ij}|)$. Let $\rho(A)$ denote the spectral radius of the matrix $A$. The following results about nonnegative matrices will be useful in Section \ref{spec-bip}.
\begin{theorem}\cite[Theorem 8.1.18]{Horn}\label{Th0.3a}
Let $A, B\in \mathbb{C}^{n \times n}$ and  suppose that $B$ is nonnegative. If $|A| \leq B$, then $\rho(A) \leq \rho(|A|) \leq \rho(B).$
\end{theorem}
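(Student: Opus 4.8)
The plan is to prove the chain $\rho(A)\le\rho(|A|)\le\rho(B)$ by reducing everything to two elementary facts: first, that the entrywise modulus is submultiplicative, i.e. $|MN|\le|M|\,|N|$ for all $M,N\in\mathbb{C}^{n\times n}$, which is just the triangle inequality applied to the matrix-product formula; and second, Gelfand's formula $\rho(M)=\lim_{k\to\infty}\|M^k\|^{1/k}$, valid for any submultiplicative matrix norm $\|\cdot\|$. I would fix a norm that is monotone on the nonnegative matrices --- for instance $\|M\|=\max_j\sum_i|m_{ij}|$, the maximum absolute column sum --- which satisfies $0\le P\le Q\Rightarrow\|P\|\le\|Q\|$ and moreover $\|M\|=\|\,|M|\,\|$.

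The first step is to iterate submultiplicativity: by induction $|A^{k+1}|=|A\cdot A^k|\le|A|\,|A^k|\le|A|\,|A|^k=|A|^{k+1}$, so $|A^k|\le|A|^k$ for all $k\ge1$. Combining this with $|A|\le B$ and the fact that the entrywise order is preserved under products of nonnegative matrices (again induction: $|A|^{k+1}\le B\,|A|^k\le B\,B^k$), we obtain the entrywise chain $|A^k|\le|A|^k\le B^k$ for every $k$.

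The second step is to apply the monotone norm to this chain: $\|A^k\|\le\|\,|A^k|\,\|\le\|\,|A|^k\,\|\le\|B^k\|$. Taking $k$-th roots and letting $k\to\infty$, Gelfand's formula yields $\rho(A)\le\rho(|A|)\le\rho(B)$, which is exactly the assertion.

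I do not expect a genuine obstacle here; the only point requiring a little care is the choice of norm, since $\|M\|=\|\,|M|\,\|$ is not automatic for an arbitrary matrix norm but does hold for the column-sum (or row-sum) norm. As an alternative I could avoid Gelfand and argue via Collatz--Wielandt: if $\lambda$ is an eigenvalue of $A$ with $|\lambda|=\rho(A)$ and $Ax=\lambda x$, then $|A|\,|x|\ge|\lambda|\,|x|=\rho(A)\,|x|$ with $|x|\ge0$, $|x|\ne0$, forcing $\rho(|A|)\ge\rho(A)$; and for $0\le|A|\le B$ one perturbs to $B+\varepsilon J$ (positive, hence with a strictly positive Perron eigenvector $z$), notes $|A|\,z\le(B+\varepsilon J)z=\rho(B+\varepsilon J)z$, deduces $\rho(|A|)\le\rho(B+\varepsilon J)$, and lets $\varepsilon\downarrow0$ using continuity of the spectral radius. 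Since the statement is quoted from Horn--Johnson, either route suffices, and I would present the shorter Gelfand argument.
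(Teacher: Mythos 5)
Your proof is correct. The paper itself gives no proof of this statement---it is quoted verbatim from Horn and Johnson \cite[Theorem 8.1.18]{Horn}---and your Gelfand-formula argument (the entrywise chain $|A^k|\leq |A|^k\leq B^k$, a monotone norm, and $k$-th roots) is essentially the standard proof given in that source, with the Collatz--Wielandt alternative also valid.
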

\begin{theorem}\cite[Theorem 8.4.5]{Horn}\label{Th0.3}
Let $A, B\in \mathbb{C}^{n \times n}$. Suppose $A$ is nonnegative and irreducible, and $A\geq |B|$. Let $\lambda=e^{i \theta} \rho(B)$ be a given maximum-modulus eigenvalue of $B$. If $\rho(A)=\rho(B)$, then there is a diagonal unitary  matrix $D \in \mathbb{C}^{n \times n}$ such that $B=e^{i\theta}DAD^{-1}$.\\
\end{theorem}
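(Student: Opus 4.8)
The plan is to run the classical Wielandt-type argument from Perron--Frobenius theory. Write $r=\rho(A)=\rho(B)$. Since $A$ is nonnegative and irreducible, Perron--Frobenius guarantees that $r>0$ is a simple eigenvalue of $A$ admitting a strictly positive right eigenvector, and likewise that $A^{T}$ has a strictly positive eigenvector $w$ with $w^{T}A = r\,w^{T}$.

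First I would fix an eigenvector $y\neq 0$ of $B$ with $By = e^{i\theta} r\, y$. Taking absolute values entrywise and using $|B|\le A$ gives
\[
r|y| = |By| \le |B|\,|y| \le A|y|,
\]
so $A|y| - r|y|\ge 0$. Pairing with the positive left Perron vector $w$ annihilates the quantity on the left, since $w^{T}A|y| = r\, w^{T}|y|$; as $w>0$, this forces $A|y| = r|y|$. Hence $|y|$ is a nonnegative eigenvector of the irreducible matrix $A$ for its Perron value, so $|y|>0$, and after rescaling $|y|$ is \emph{the} Perron vector of $A$.

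Next, the chain of inequalities above collapses to equalities. From $|B|\,|y| = A|y|$ together with $|y|>0$ and $A\ge |B|$ one gets $A = |B|$ entrywise. And $|By| = |B|\,|y|$ is an equality case of the triangle inequality, so in each row $i$ all nonzero terms $b_{ij}y_j$ share a common argument, which must equal $\arg\!\big(\sum_j b_{ij}y_j\big) = \arg(e^{i\theta}r\,y_i) = \theta + \arg y_i$. Writing $y_j = |y_j|e^{i\phi_j}$ (legitimate since $|y_j|>0$) and $D = \operatorname{diag}(e^{i\phi_1},\dots,e^{i\phi_n})$, a diagonal unitary matrix, this says $b_{ij}e^{i\phi_j} = |b_{ij}|e^{i(\theta+\phi_i)} = a_{ij}e^{i(\theta+\phi_i)}$ for every $i,j$ (the case $b_{ij}=0$ being trivial), i.e. $e^{-i\phi_i}b_{ij}e^{i\phi_j} = e^{i\theta}a_{ij}$. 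That is precisely $D^{-1}BD = e^{i\theta}A$, equivalently $B = e^{i\theta}DAD^{-1}$, as required.

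The main obstacle is the middle step: one must ensure that $|y|$ is genuinely forced to be the positive Perron vector of $A$ --- this is exactly where irreducibility enters, through the strictly positive left eigenvector $w$ --- and then that the triangle-inequality equality case is exploited uniformly across all entries and all rows, including those with $b_{ij}=0$, so that the phases $\phi_i$ assemble into a single diagonal unitary conjugation. The remainder is bookkeeping with arguments of complex numbers.
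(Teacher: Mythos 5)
This theorem is quoted in the paper from Horn and Johnson (Theorem 8.4.5) without proof, so there is no in-paper argument to compare against; your write-up is the standard Wielandt equality-case argument and is essentially the textbook proof. Every step checks out: the left Perron vector forces $A|y|=r|y|$, hence $|y|>0$ and $A=|B|$, and the triangle-inequality equality in each row (where $(By)_i=e^{i\theta}r\,y_i\neq 0$ since $|y|>0$ and $r>0$) aligns the phases so that $D=\operatorname{diag}(e^{i\phi_1},\dots,e^{i\phi_n})$ gives $B=e^{i\theta}DAD^{-1}$. The proof is correct.
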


This article is organized as follows: In Section \ref{bounds}, we provide some results on the eigenvalue bounds for the adjacency matrix of $\mathbb{T}$-gain graphs. In Section \ref{coeff-char-per}, we study some of the properties of  the coefficients of characteristic and permanental polynomials of gain adjacency matrices. In Section \ref{spec-bip}, we focus on the study of the spectral properties of  $\mathbb{T}$-gain graphs. We establish an equivalent condition for the equality of set of eigenvalues of a $\mathbb{T}$-gain graph  and that of the underlying graph.  Finally, we give a characterization for the bipartite graphs in terms of the eigenvalues of the gains and the eigenvalues of the underlying graph.\\

\section{Eigenvalue bounds for $\mathbb{T}$-gain graphs}\label{bounds}
For any complex square matrix $B$, we use $\lambda(B)$  to denote its eigenvalues (or simply $\lambda$ when there is only one matrix under consideration). Let $B$ be a complex square matrix of order $n$ with real eigenvalues. We arrange the eigenvalues of $B$ as
$$\lambda_n\leq\lambda_{n-1}\leq\ldots\leq\lambda_2\leq\lambda_1.$$
We now recall some important results associated to complex square matrices having real eigenvalues.
\begin{theorem}\cite[Theorem 2.1]{Wolk}
    \label{bound_thm1}
    Let $B$ be an $n\times n$ complex matrix with real eigenvalues,
    and let $$r=\frac{\text{trace } B}{n}\ \ \text{ and }\ \ s^2=\frac{\text{trace } B^2}{n}-r^2, $$
    then
    $$r-s(n-1)^\frac{1}{2}\leq\lambda_n\leq r-s/(n-1)^\frac{1}{2},~\mbox{and } $$
    $$r+s/(n-1)^\frac{1}{2}\leq\lambda_1\leq r+s(n-1)^\frac{1}{2}.$$
\end{theorem}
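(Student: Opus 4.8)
The plan is to deduce all four inequalities from two elementary estimates applied to the nonnegative numbers $a_i:=\lambda_1-\lambda_i$, $i=1,\dots,n$ (note $a_1=0$ and $a_i\ge 0$ since $\lambda_1$ is the largest eigenvalue). Throughout I assume $n\ge 2$, the case $n=1$ being trivial. First I would record the standard trace identities: putting $B$ in Schur triangular form gives $\text{trace }B=\sum_{i=1}^n\lambda_i$ and $\text{trace }B^2=\sum_{i=1}^n\lambda_i^2$, hence $\sum_{i=1}^n\lambda_i=nr$ and $\sum_{i=1}^n\lambda_i^2=n(s^2+r^2)$; subtracting shows $ns^2=\sum_{i=1}^n(\lambda_i-r)^2\ge 0$, so $s\ge 0$ is well defined and $\lambda_n\le r\le\lambda_1$. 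Expanding squares then gives the two moment identities $\sum_{i=1}^n a_i=n(\lambda_1-r)$ and $\sum_{i=1}^n a_i^2=n\big[(\lambda_1-r)^2+s^2\big]$.

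Next I would prove the two bounds on $\lambda_1$. For the upper bound, apply Cauchy--Schwarz to the $n-1$ numbers $a_2,\dots,a_n$: $\big(\sum_{i=2}^n a_i\big)^2\le(n-1)\sum_{i=2}^n a_i^2$. Since $a_1=0$ these are the full sums, so substituting the moment identities and dividing by $n$ yields $(\lambda_1-r)^2\le(n-1)s^2$, i.e.\ $\lambda_1\le r+s(n-1)^{1/2}$ (using $\lambda_1-r\ge 0$). For the lower bound I would instead use that $\sum_{i=1}^n a_i^2\le\big(\sum_{i=1}^n a_i\big)^2$, which holds because all $a_i\ge 0$ makes the cross terms nonnegative; substituting the moment identities and dividing by $n$ yields $s^2\le(n-1)(\lambda_1-r)^2$, i.e.\ $\lambda_1\ge r+s/(n-1)^{1/2}$.

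Finally, to get the bounds on $\lambda_n$ I would apply the two $\lambda_1$-bounds to $-B$: its eigenvalues are $-\lambda_1\le\cdots\le-\lambda_n$, so $-\lambda_n$ is its largest eigenvalue, while its $r$-value is $-r$ and its $s$-value is again $s$ (as $\text{trace}(-B)^2=\text{trace }B^2$). This gives $-r+s/(n-1)^{1/2}\le-\lambda_n\le-r+s(n-1)^{1/2}$, and negating yields $r-s(n-1)^{1/2}\le\lambda_n\le r-s/(n-1)^{1/2}$, completing the proof.

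The only step that needs care is the lower bound on $\lambda_1$: Cauchy--Schwarz only produces the ``$\le$'' direction and just reproduces the upper bound, so one must instead exploit the sign condition $a_i\ge 0$ through the reverse-type estimate $\sum a_i^2\le(\sum a_i)^2$. Everything else is routine manipulation of the two moment identities and the trace formulas.
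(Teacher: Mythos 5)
This theorem is imported from \cite{Wolk} and the paper gives no proof of it, so there is nothing internal to compare against; judged on its own, your argument is correct and is essentially the classical Wolkowicz--Styan variance argument. The key identities check out: Schur triangularization gives $\mathrm{trace}\,B=\sum\lambda_i$ and $\mathrm{trace}\,B^2=\sum\lambda_i^2$, whence $ns^2=\sum(\lambda_i-r)^2\ge 0$, and with $a_i=\lambda_1-\lambda_i$ one indeed gets $\sum a_i=n(\lambda_1-r)$ and $\sum a_i^2=n\big[(\lambda_1-r)^2+s^2\big]$. Cauchy--Schwarz on the $n-1$ numbers $a_2,\dots,a_n$ gives $(\lambda_1-r)^2\le(n-1)s^2$, and the reverse-type bound $\sum a_i^2\le\big(\sum a_i\big)^2$ (valid since all $a_i\ge0$, so the cross terms are nonnegative) gives $s^2\le(n-1)(\lambda_1-r)^2$; both conclusions use $\lambda_1-r\ge0$ and $s\ge0$, which you justified. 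Passing to $-B$ to transfer the bounds to $\lambda_n$ is clean, since $-B$ has real eigenvalues, mean $-r$, and the same $s$. You were right to flag the lower bound as the delicate step: Cauchy--Schwarz alone cannot produce it, and exploiting nonnegativity of the $a_i$ is exactly what is needed. The only cosmetic caveat is that the statement implicitly requires $n\ge2$ for the expressions $s/(n-1)^{1/2}$ to make sense, which you have already noted.
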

\begin{theorem}\cite{Horn}
    \label{bound_thm2}
    Let $B$ be a Hermitian matrix of order n and let $B_r$ be any principal submatrix of $B$ of order $r$. Then for $1\leq k\leq r$,
    $$\lambda_{n+k-r}(B)\leq\lambda_k(B_r)\leq\lambda_k(B).$$
\end{theorem}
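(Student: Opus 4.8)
The plan is to obtain both interlacing inequalities from the Courant--Fischer min--max characterization of the eigenvalues of a Hermitian matrix. First I would record, for a Hermitian matrix $M$ of order $m$ with eigenvalues ordered decreasingly as $\lambda_m(M)\le\cdots\le\lambda_1(M)$, the two dual variational formulas
$$\lambda_k(M)=\max_{\substack{S\subseteq\mathbb{C}^m\\ \dim S=k}}\ \min_{\substack{x\in S\\ x\neq 0}}\frac{x^*Mx}{x^*x}=\min_{\substack{T\subseteq\mathbb{C}^m\\ \dim T=m-k+1}}\ \max_{\substack{x\in T\\ x\neq 0}}\frac{x^*Mx}{x^*x},$$
valid for $1\le k\le m$. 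Since applying the same permutation to the rows and columns of $B$ changes neither $\sigma(A(B))$ nor the unordered multiset of principal submatrices of a given size, I may assume without loss of generality that $B_r$ is the leading $r\times r$ principal submatrix of $B$. Identifying $\mathbb{C}^r$ with the subspace $W=\{(x_1,\dots,x_r,0,\dots,0)\}\subseteq\mathbb{C}^n$, each $x\in\mathbb{C}^r$ has an embedding $\tilde x\in W$ with $x^*B_rx=\tilde x^*B\tilde x$ and $x^*x=\tilde x^*\tilde x$; thus the Rayleigh quotients of $B_r$ on $\mathbb{C}^r$ coincide with those of $B$ restricted to $W$. This identification is the only link between the two matrices the argument uses.

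For the upper bound $\lambda_k(B_r)\le\lambda_k(B)$, I would pick a $k$-dimensional subspace $S\subseteq\mathbb{C}^r$ attaining the max--min value for $B_r$; its image $\tilde S\subseteq W$ is again $k$-dimensional, and
$$\lambda_k(B_r)=\min_{0\neq x\in S}\frac{x^*B_rx}{x^*x}=\min_{0\neq \tilde x\in \tilde S}\frac{\tilde x^*B\tilde x}{\tilde x^*\tilde x}\le\lambda_k(B),$$
the last inequality being the max--min formula for $B$ applied to the particular subspace $\tilde S$. For the lower bound $\lambda_{n+k-r}(B)\le\lambda_k(B_r)$, I would instead use the min--max formula: choose an $(r-k+1)$-dimensional subspace $T\subseteq\mathbb{C}^r$ attaining the min--max value for $B_r$, so $\lambda_k(B_r)=\max_{0\neq x\in T}\frac{x^*B_rx}{x^*x}$. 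Its image $\tilde T\subseteq W$ has dimension $r-k+1=n-(n+k-r)+1$, so the min--max formula for $B$ gives
$$\lambda_{n+k-r}(B)\le\max_{0\neq\tilde x\in\tilde T}\frac{\tilde x^*B\tilde x}{\tilde x^*\tilde x}=\max_{0\neq x\in T}\frac{x^*B_rx}{x^*x}=\lambda_k(B_r).$$
Combining the two displays yields the claimed chain $\lambda_{n+k-r}(B)\le\lambda_k(B_r)\le\lambda_k(B)$ for every $1\le k\le r$.

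I do not anticipate a substantive obstacle; the content is essentially bookkeeping with subspace dimensions. The one point that genuinely requires care is keeping the indexing consistent with the paper's decreasing convention $\lambda_n\le\cdots\le\lambda_1$: one must use the $k$-dimensional (max--min) form for the top inequality and the $(n-k+1)$-dimensional (min--max) form for the bottom one, and check that the shift $k\mapsto n+k-r$ is exactly what makes the codimensions match, i.e. $(n-r)+k$ deleted coordinates correspond to the gap between $\lambda_k(B_r)$ and $\lambda_{n+k-r}(B)$. As an alternative route one could argue by induction on $n-r$, reducing to the single-deletion case $r=n-1$ (where $\lambda_{k+1}(B)\le\lambda_k(B_{n-1})\le\lambda_k(B)$) and handling that case by intersecting a $(k+1)$-dimensional optimal subspace for $B$ with $W$, which is forced to be at least $k$-dimensional; but the min--max argument above is cleaner and is the one I would present.
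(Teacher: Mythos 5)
Your argument is correct and is the standard Courant--Fischer proof of Cauchy's interlacing theorem; the indexing with the decreasing convention $\lambda_n\le\cdots\le\lambda_1$ and the dimension count $n-(n+k-r)+1=r-k+1$ both check out. The paper itself offers no proof --- it cites this as a known result from Horn and Johnson --- and the proof given there is essentially the same min--max argument, so there is nothing to contrast (only a trivial typo on your side: $\sigma(A(B))$ should read $\sigma(B)$).
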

It is well known that (\cite{Bap-book,Cve1}),  the $(i,j)$-th entry of the $k$-th power adjacency matrix of a simple graph provide the number of $k$-walks (walks of length $k$) from the vertex $i$ to $j$. The next lemma is the counter part of the above statement for $\mathbb{T}$-gain graphs.
\begin{lemma}
    \label{bound_lem1}
    Let $\Phi$ be a $\mathbb{T}$-gain graph. Then the $(i,j)$-th entry $a_{ij}^{(k)}$ of $A(\Phi)^k$ is the sum of gains of all $k$-walks from the vertex $i$ to the vertex $j$.
\end{lemma}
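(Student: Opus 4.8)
The plan is to prove this by induction on $k$, following the classical argument for $(0,1)$-adjacency matrices while keeping track of the edge gains. First I would fix the relevant notation: for a walk $W \colon v_i = v_{i_0} \sim v_{i_1} \sim \cdots \sim v_{i_k} = v_j$ of length $k$ from $v_i$ to $v_j$, define its \emph{gain} by
$$\varphi(W) = \varphi(e_{i_0 i_1})\,\varphi(e_{i_1 i_2})\cdots \varphi(e_{i_{k-1} i_k}).$$
Note that this is well defined: although the gain of an edge depends on its orientation, a walk from $v_i$ to $v_j$ comes equipped with a direction, so each edge is traversed in a definite orientation and the product is unambiguous. The assertion to be proved is then that $a_{ij}^{(k)} = \sum_{W} \varphi(W)$, the sum ranging over all walks $W$ of length $k$ from $v_i$ to $v_j$.

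For the base case, take $k=0$: $A(\Phi)^0 = I$, so $a_{ij}^{(0)} = \delta_{ij}$, which matches the claim since the only walk of length $0$ is the trivial walk at a vertex, with gain equal to the empty product $1$. (Alternatively one may start at $k=1$, where $a_{ij}^{(1)} = a_{ij} = \varphi(e_{ij})$ when $v_i \sim v_j$ and $0$ otherwise, which is exactly the sum of gains over the length-$1$ walks from $v_i$ to $v_j$.)

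For the inductive step, assume the statement holds for $k-1$ and write $A(\Phi)^{k} = A(\Phi)^{k-1} A(\Phi)$, so that
$$a_{ij}^{(k)} = \sum_{l=1}^{n} a_{il}^{(k-1)}\, a_{lj}.$$
By the inductive hypothesis, $a_{il}^{(k-1)} = \sum_{W'} \varphi(W')$ over all $(k-1)$-walks $W'$ from $v_i$ to $v_l$, while $a_{lj} = \varphi(e_{lj})$ if $v_l \sim v_j$ and $a_{lj} = 0$ otherwise. Hence $a_{il}^{(k-1)} a_{lj} = \sum_{W'} \varphi(W')\varphi(e_{lj})$, which is precisely the sum of gains of all $k$-walks from $v_i$ to $v_j$ whose last vertex before $v_j$ is $v_l$; here I would invoke the elementary bijection that every $k$-walk from $v_i$ to $v_j$ decomposes uniquely as a $(k-1)$-walk from $v_i$ to a neighbour $v_l$ of $v_j$ followed by the edge $e_{lj}$, and that $\varphi$ of the concatenated walk is the product of the two gains. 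Summing over $l$ collects every $k$-walk from $v_i$ to $v_j$ exactly once, giving the result.

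I do not expect any genuine obstacle here; the only point that deserves a sentence of care is orientation consistency—since the gain of a walk is not invariant under reversal, one must append $e_{lj}$ (traversed from $v_l$ to $v_j$) at the correct end of the walk, matching the order in the matrix product $A(\Phi)^{k-1}A(\Phi)$. With that observation the argument is purely formal.
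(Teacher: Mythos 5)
Your proof is correct and rests on the same idea as the paper's: the nonzero terms in the expansion of $a_{ij}^{(k)}$ as a sum of products $a_{ii_1}a_{i_1i_2}\cdots a_{i_{k-1}j}$ correspond exactly to the $k$-walks from $v_i$ to $v_j$, each contributing its gain. The paper simply writes out this full expansion in one step rather than organizing it as an induction on $k$, but the content is the same, and your extra remarks on the well-definedness of the gain of a directed walk and on orientation consistency are sound.
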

\begin{proof}
Let $A(\Phi)^k=(a_{ij}^{(k)})$. Then $$a_{ij}^{(k)}=\sum a_{ii_1}a_{i_1i_2}\ldots a_{i_{k-2}i_{k-1}}a_{i_{k-1}j},$$ where $1\leq i_1,i_2,\ldots, i_{k-1}\leq n$. Now, any term in the right side is nonzero if and only if $v_i\sim v_{i_1}$, $v_{i_1}\sim v_{i_2}$, $\ldots$, $v_{i_{k-1}}\sim v_j$. Hence the result follows.
\end{proof}
Next, we derive lower and upper bounds for the largest and smallest eigenvalues of  $\mathbb{T}$-gain graph in terms of the number of edges and vertices of the underlying graph. The following result is known for the adjacency matrices \cite{Bap-book}-- however, the proof technique the different from the adjacency matrix case.
\begin{theorem}
    \label{key}
    Let $\Phi$ be a $\mathbb{T}$-gain graph with the underlying graph $G$. If $G$ has $n$ vertices and $m$ edges, then the  smallest and largest eigenvalues of $A(\Phi)$ satisfy
    $$-\sqrt{\frac{2m(n-1)}{n}}\leq\lambda_n\leq-\sqrt{\frac{2m}{n(n-1)}},$$
    and
    $$\sqrt{\frac{2m}{n(n-1)}}\leq\lambda_1\leq\sqrt{\frac{2m(n-1)}{n}}.$$Both of the above bounds are tight.
\end{theorem}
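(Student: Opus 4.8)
The plan is to apply Theorem~\ref{bound_thm1} to the Hermitian matrix $B=A(\Phi)$, whose eigenvalues are real; this reduces the whole statement to computing the two numbers $r=\operatorname{trace}(A(\Phi))/n$ and $s^{2}=\operatorname{trace}(A(\Phi)^{2})/n-r^{2}$ and then substituting them into the bounds of that theorem.

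First I would note that $A(\Phi)$ has zero diagonal, since $G$ is simple (no loops), so $\operatorname{trace}(A(\Phi))=0$ and hence $r=0$. For the second moment I would invoke Lemma~\ref{bound_lem1}: the $(i,i)$-entry of $A(\Phi)^{2}$ is the sum of the gains of all closed $2$-walks at $v_{i}$, that is $\sum_{v_{j}\sim v_{i}}\varphi(e_{ij})\varphi(e_{ji})=\sum_{v_{j}\sim v_{i}}\varphi(e_{ij})\overline{\varphi(e_{ij})}=\sum_{v_{j}\sim v_{i}}1=d_{i}$. Summing over $i$ gives $\operatorname{trace}(A(\Phi)^{2})=\sum_{i=1}^{n}d_{i}=2m$ (equivalently, $\operatorname{trace}(A(\Phi)^{2})=\sum_{i,j}|a_{ij}|^{2}=2m$ because $A(\Phi)$ is Hermitian). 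Therefore $s^{2}=2m/n$, i.e.\ $s=\sqrt{2m/n}$. Plugging $r=0$ and $s=\sqrt{2m/n}$ into Theorem~\ref{bound_thm1} and using $s\,(n-1)^{1/2}=\sqrt{2m(n-1)/n}$ and $s/(n-1)^{1/2}=\sqrt{2m/(n(n-1))}$ yields exactly the four displayed inequalities.

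For the tightness claim I would use gain graphs on the complete graph $K_{n}$, for which $m=\binom{n}{2}$, so that $2m(n-1)/n=(n-1)^{2}$ and $2m/(n(n-1))=1$. The trivial gain graph $(K_{n},1)$ has $\sigma(A(\Phi))=\{\,n-1,\,\underbrace{-1,\dots,-1}_{n-1}\,\}$, so $\lambda_{1}=n-1=\sqrt{2m(n-1)/n}$ and $\lambda_{n}=-1=-\sqrt{2m/(n(n-1))}$; this attains the upper bound for $\lambda_{1}$ and the upper bound for $\lambda_{n}$. Taking instead the gain graph $\Phi$ on $K_{n}$ with $\varphi(e)=-1$ for every oriented edge $e$ gives $A(\Phi)=-A(K_{n})$, hence $\sigma(A(\Phi))=\{\,\underbrace{1,\dots,1}_{n-1},\,-(n-1)\,\}$, so $\lambda_{1}=1=\sqrt{2m/(n(n-1))}$ and $\lambda_{n}=-(n-1)=-\sqrt{2m(n-1)/n}$; this attains the two remaining bounds. (For $n=2$ the two examples coincide and all four bounds equal $\pm1$.)

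I do not expect a genuine obstacle here: once the moments are known the result is a one-line application of Theorem~\ref{bound_thm1}, and the only steps that require a little care are the evaluation $\operatorname{trace}(A(\Phi)^{2})=2m$ (where Lemma~\ref{bound_lem1} or the Hermitian property is used) and the bookkeeping that matches each of the two complete-graph examples to the correct pair of inequalities.
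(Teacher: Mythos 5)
Your proposal is correct and follows essentially the same route as the paper: compute $r=0$ and $s^{2}=2m/n$ via the diagonal of $A(\Phi)^{2}$, apply Theorem~\ref{bound_thm1}, and exhibit tightness with the gain graphs $(K_{n},1)$ and $(K_{n},-1)$. Your bookkeeping of which example attains which pair of equalities matches the paper's claim, and is in fact stated more explicitly than in the paper.
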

\begin{proof}
   Since the $i$-th diagonal entry of $A(\Phi)^2$ is $\sum_{v_i\sim v_k}\varphi(e_{ik})\varphi(e_{ki})=d_i$, we have
    $r=\frac{\text{trace }A(\Phi)}{n}=0\ \ \text{and}\ \ s^2=\frac{\text{trace } A(\Phi)^2}{n}-r^2=\frac{2m}{n}.$
    Thus, by Theorem \ref{bound_thm1}, we have
    $$-\sqrt{\frac{2m(n-1)}{n}}\leq\lambda_n\leq-\sqrt{\frac{2m}{n(n-1)}},$$
    and
    $$\sqrt{\frac{2m}{n(n-1)}}\leq\lambda_1\leq\sqrt{\frac{2m(n-1)}{n}}.$$
    Hence the proof is completed.
    
    Let $ \Phi=(K_{n}, \varphi) $ be a $ \mathbb{T} $-gain graph with $ \varphi(e)=-1 $ for all edges, then both of the left equality occurs and if $ \varphi(e)=1 $ for all edges then both right equality attains.
\end{proof}

In the next theorem, we derive a lower bound for the largest eigenvalue of the $\mathbb{T}$-gain graph.
\begin{theorem}
    \label{key}
    Let $\Phi$ be a $\mathbb{T}$-gain graph with the underlying graph $G$. Then
    $$\lambda_1\geq \sqrt[3]{\frac{6}{n}\sum_{C\in\mathcal{C}_3(G)}\mathfrak{R}(C)},$$
    where $\mathcal{C}_3(G)$ denotes the collection of all cycles of length 3 in G.
\end{theorem}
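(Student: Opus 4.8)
The plan is to estimate $\text{trace}\,A(\Phi)^3$ from above by $n\lambda_1^3$ and, independently, to evaluate it combinatorially through the gain–walk interpretation of matrix powers supplied by Lemma \ref{bound_lem1}.

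First I would compute the diagonal entries of $A(\Phi)^3$. By Lemma \ref{bound_lem1}, the $(i,i)$-entry of $A(\Phi)^3$ is the sum of the gains of all closed $3$-walks based at $v_i$. Since $G$ is simple (no loops), a closed walk of length $3$ cannot repeat a vertex, so it traverses a triangle of $G$ through $v_i$ in one of its two orientations. For a triangle $C$ through $v_i$, the two orientations contribute the gains $\varphi(C)$ and $\varphi(C)^{-1}=\overline{\varphi(C)}$, which sum to $2\mathfrak{R}(C)$; hence $(A(\Phi)^3)_{ii}=\sum_{C\in\mathcal{C}_3(G),\ v_i\in C}2\mathfrak{R}(C)$. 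Summing over $i$ and noting that each triangle is counted once for each of its three vertices gives $\text{trace}\,A(\Phi)^3=6\sum_{C\in\mathcal{C}_3(G)}\mathfrak{R}(C)$.

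Next, since $A(\Phi)$ is Hermitian with real eigenvalues $\lambda_n\le\cdots\le\lambda_1$, the spectral mapping theorem gives $\text{trace}\,A(\Phi)^3=\sum_{i=1}^n\lambda_i^3$. Because $t\mapsto t^3$ is monotonically increasing on $\mathbb{R}$ and $\lambda_i\le\lambda_1$ for every $i$, we get $\lambda_i^3\le\lambda_1^3$ termwise, hence $\sum_i\lambda_i^3\le n\lambda_1^3$. Combining the two computations yields $6\sum_{C}\mathfrak{R}(C)\le n\lambda_1^3$, and applying the (monotone) cube root — which keeps the inequality valid even when the quantity under the radical is negative — gives the stated bound.

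The only delicate point, and the place where the argument departs from the classical ordinary-graph proof, is the inequality $\sum_i\lambda_i^3\le n\lambda_1^3$: for a genuine graph one would invoke $|\lambda_i|\le\lambda_1$ from Perron--Frobenius, but this can fail for a $\mathbb{T}$-gain graph (e.g.\ $K_n$ with all gains $-1$, where $\lambda_n=1-n$ while $\lambda_1=1$). The resolution is simply that monotonicity of cubing already forces $\lambda_i^3\le\lambda_1^3$ without any control on $|\lambda_n|$. Everything else is the gain analogue of the standard triangle-counting identity, the adaptation being that each triangle contributes $2\mathfrak{R}(C)$ (from its two orientations) rather than $2$.
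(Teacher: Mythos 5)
Your proposal is correct and follows essentially the same route as the paper: both compute $\operatorname{trace}A(\Phi)^3 = 6\sum_{C\in\mathcal{C}_3(G)}\mathfrak{R}(C)$ via the gain--walk interpretation and then use that $\lambda_1^3$ dominates the average of the eigenvalues of $A(\Phi)^3$ (the paper phrases your termwise bound $\lambda_i^3\le\lambda_1^3$ as ``$\lambda_1^3$ is the largest eigenvalue of $A(\Phi)^3$,'' which is the same monotonicity-of-cubing observation). Your explicit note that Perron--Frobenius is not needed here is a nice clarification but does not change the argument.
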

\begin{proof}
    If $\lambda_1$ is the largest eigenvalue of $A(\Phi)$, then $\lambda_1^3$ is the largest eigenvalue of $A(\Phi)^3$. So, we have  $\lambda_1^3\geq\frac{1}{n}\text{ trace }A(\Phi)^3.$
    By Lemma \ref{bound_lem1}, the $i$-th diagonal entry of $A(\Phi)^3$ is $ a_{ii}^{(3)}=\sum_{v_i\sim v_j\sim v_k\sim v_i}\varphi(e_{ij})\varphi(e_{jk})\varphi(e_{ki}) = 2\sum_{C\in \mathcal{C}_3(i)}\mathfrak{R}(C),$
     where $\mathcal{C}_3(i)$ denotes collection of all triangles which contains the vertex $i$. Now, since each triangle contains 3 vertices, we have
 $$     \lambda_1^3 \geq \frac{1}{n}\text{ trace }A(\Phi)^3 =\frac{1}{n}\sum_{i=1}^na_{ii}^{(3)}=\frac{6}{n}\sum_{C\in\mathcal{C}_3(G)}\mathfrak{R}(C).$$
  The result follows by taking cube root on both sides.
\end{proof}
Next result gives a lower bound for the spectral radius of the $\mathbb{T}$-gain graph in terms of the degrees of its vertices. This extends   \cite[Theorem 1]{ravi-kum} for the gain graphs.
\begin{theorem}
    Let $\Phi$ be a $\mathbb{T}$-gain graph with the underlying graph $G$. Then
    $$\sigma\geq\frac{1}{\sqrt{2}}\max_{i<j}\sqrt{d_i+d_j+\sqrt{(d_i-d_j)^2+4|a_{ij}^{(2)}|}},$$
    where $\sigma=\max|\lambda_i|$ and $|a_{ij}^{(2)}|$ is defined as in Lemma \ref{bound_lem1}.
\end{theorem}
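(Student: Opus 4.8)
The plan is to pass from $A(\Phi)$ to its square and apply Cauchy interlacing to the $2\times 2$ principal submatrices of $A(\Phi)^2$. Since $A(\Phi)$ is Hermitian with real spectrum $\lambda_n\le\cdots\le\lambda_1$, the matrix $A(\Phi)^2$ is Hermitian and positive semidefinite, its eigenvalues are $\lambda_1^2\ge\cdots\ge\lambda_n^2\ge 0$, and in particular its largest eigenvalue is $\max_i\lambda_i^2=\sigma^2$. So it is enough to exhibit, for each pair $i<j$, a principal submatrix of $A(\Phi)^2$ whose largest eigenvalue is $\tfrac12$ times the expression under the outermost radical in the statement.

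Fixing $i<j$, I would take $B$ to be the $2\times 2$ principal submatrix of $A(\Phi)^2$ on rows and columns $i,j$. Recall that the $i$-th diagonal entry of $A(\Phi)^2$ is $\sum_{v_i\sim v_k}\varphi(e_{ik})\varphi(e_{ki})=d_i$, and that $a_{ji}^{(2)}=\overline{a_{ij}^{(2)}}$ because $A(\Phi)^2$ is Hermitian; hence
$$B=\begin{pmatrix} d_i & a_{ij}^{(2)}\\ \overline{a_{ij}^{(2)}} & d_j\end{pmatrix}, \qquad \det(tI-B)=t^2-(d_i+d_j)\,t+\bigl(d_id_j-|a_{ij}^{(2)}|^2\bigr),$$
so the larger eigenvalue of $B$ equals $\tfrac12\bigl(d_i+d_j+\sqrt{(d_i-d_j)^2+4|a_{ij}^{(2)}|^2}\bigr)$. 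By Theorem~\ref{bound_thm2} (Cauchy interlacing, with $r=2$ and $k=1$) this is at most $\lambda_1(A(\Phi)^2)=\sigma^2$. Since the pair $i<j$ was arbitrary, taking the maximum over all such pairs and then square roots yields the claimed inequality.

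I do not expect a genuine obstacle: computing the $2\times 2$ characteristic polynomial and its roots is elementary, and the remainder is just the two quoted facts (squaring a Hermitian matrix squares its spectrum, and Cauchy interlacing). The two points deserving a little care are, first, that $A(\Phi)^2$ need not be a real matrix --- unlike the ordinary adjacency case, where $A(G)^2$ has nonnegative integer entries counting common neighbours --- so it is the modulus $|a_{ij}^{(2)}|$ of the $2$-walk gain sum (as in Lemma~\ref{bound_lem1}) that enters the estimate; and second, that one genuinely needs a $2\times 2$ submatrix with $i\ne j$, since the same computation with $i=j$ would give the false inequality $\sigma\ge\sqrt{2d_i}$, which is exactly why the statement restricts to $i<j$.
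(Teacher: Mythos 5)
Your proposal is correct and follows essentially the same route as the paper: square the Hermitian matrix $A(\Phi)$, identify the $2\times 2$ principal submatrix $\begin{pmatrix} d_i & a_{ij}^{(2)}\\ \overline{a_{ij}^{(2)}} & d_j\end{pmatrix}$ of $A(\Phi)^2$ via Lemma \ref{bound_lem1}, compute its larger eigenvalue, and apply the interlacing inequality of Theorem \ref{bound_thm2}. Note that both your derivation and the paper's own proof produce $4|a_{ij}^{(2)}|^2$ under the inner radical, so the $4|a_{ij}^{(2)}|$ appearing in the theorem statement is evidently a typographical slip.
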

\begin{proof}
    We have, $\sigma^2=\lambda_1(A(\Phi)^2)$. Let $A(\Phi)^2[i,j]=\left[\begin{array}{cc}
    a_{ii}^{(2)}&a_{ij}^{(2)}\\a_{ji}^{(2)}&a_{jj}^{(2)}
    \end{array}
    \right]$ be a principal submatrix of $A(\Phi)^2$. Then, by Theorem \ref{bound_thm2}, we have
    $\lambda_1(A(\Phi)^2)\geq\lambda_1(A(\Phi)^2[i,j]).$
    By Lemma \ref{bound_lem1}, $a_{ii}^{(2)}=d_i$, $a_{jj}^{(2)}=d_j$ and
    $\overline{a_{ji}^{(2)}}=a_{ij}^{(2)}=\sum_{v_i\sim v_k\sim v_j}\varphi(e_{ik})\varphi(e_{kj}).$
    Thus, \begin{eqnarray*}
        \lambda_1(A(\Phi)^2[i,j])&=&\frac{1}{2}\bigg{[}d_i+d_j+\sqrt{(d_i+d_j)^2-4(d_id_j-|a_{ij}^{(2)}|^2)}\bigg{]},\\
        &=&\frac{1}{2}\bigg{[}d_i+d_j+\sqrt{(d_i-d_j)^2+4|a_{ij}^{(2)}|^2}\bigg{]}.
    \end{eqnarray*}
    Since the above relation holds for all $i\neq j$, we have
    $$\sigma\geq\frac{1}{\sqrt{2}}\max_{i<j}\sqrt{d_i+d_j+\sqrt{(d_i-d_j)^2+4|a_{ij}^{(2)}|^2}}.$$
\end{proof}

\section{Characteristic and permanental polynomial of $\mathbb{T}$-gain graphs}\label{coeff-char-per}

In this section first we recall the some known definitions. Then, we compute the coefficients of the characteristic and permanental polynomials in terms of the gains of the edges.
\begin{definition}
Let $K_n$ denote the complete graph on $n$ vertices, and $K_{p,q}$ denote the complete bipartite graph on $p + q$ vertices with the vertex partition $V = V_1 \cup V_2 $, $|V_1| = p$ and $|V_2| = q$ . A graph $G$ is called an \textit{elementary graph}, if each of its component is either a $K_2$ or a cycle.
\end{definition}
Let $\mathcal{H}(G)$ denote the collection of all spanning elementary subgraphs of a graph $G$, and for any $H \in\mathcal{H}(G)$, let $\mathcal{C}(H)$ denote the collection of cycles in $H$. In \cite{Ger}, authors considered gain graphs with gains are taken from an arbitrary group. The following two results can be proved by taking the gains from the multiplicative group $\mathbb{T}$ in Corollary 2.3 and  Theorem 2.2 of \cite{Ger}, respectively.
\begin{theorem}
    \label{det1}
    Let $\Phi$ be a $\mathbb{T}$-gain graph with the underlying graph $G$. Then
    \begin{equation}
    \det A(\Phi)=\sum_{H\in\mathcal{H}(G)}(-1)^{n-p(H)}2^{c(H)}\prod_{C\in \mathcal{C}(H)}\Re(C),
    \end{equation}
    where $p(H)$ is the number of components in $H$ and  $c(H)$ is the number of cycles in $H$.
\end{theorem}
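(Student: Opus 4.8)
The plan is to adapt the classical Sachs-type coefficient theorem (as found in, e.g., \cite{Cve1}) for the characteristic polynomial of the ordinary adjacency matrix to the $\mathbb{T}$-gain setting, and then specialize it to the determinant, which is $(-1)^n a_n$. First I would write $\det A(\Phi)$ via the permutation expansion $\det A(\Phi) = \sum_{\pi \in S_n} \operatorname{sgn}(\pi) \prod_{i=1}^n a_{i\pi(i)}$. A summand is nonzero only if $a_{i\pi(i)} \neq 0$ for every $i$, i.e. $v_i \sim v_{\pi(i)}$ for all $i$; decomposing $\pi$ into disjoint cycles, each fixed point would force $a_{ii}\neq 0$, which is impossible, so every cycle of $\pi$ has length $\geq 2$. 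A $2$-cycle $(i\,j)$ of $\pi$ corresponds to an edge $e_{ij}$ of $G$ and contributes $a_{ij}a_{ji} = \varphi(e_{ij})\overline{\varphi(e_{ij})} = 1$; a cycle of length $\ell \geq 3$ corresponds to a cycle $C$ in $G$ and contributes $\varphi(C)$ or $\varphi(\overline{C})$ depending on orientation. Collecting the $\pi$'s whose cycle structure is supported on a fixed spanning elementary subgraph $H$, I would show the permutations realizing $H$ are in bijection with choices of orientation for each cycle-component of $H$ (the $K_2$-components admit only one transposition each), so there are $2^{c(H)}$ of them.

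The next step is bookkeeping the sign and the gain product. For a permutation $\pi$ whose nontrivial cycles are $c_1, \dots, c_p$ of lengths $\ell_1, \dots, \ell_p$ with $\sum \ell_t = n$ (so $p = p(H)$ is the number of components of $H$), we have $\operatorname{sgn}(\pi) = \prod_t (-1)^{\ell_t - 1} = (-1)^{n - p(H)}$, independent of the orientation choices. For the gain product: a $K_2$-component contributes $1$; a cycle-component $C$ of $H$ contributes $\varphi(C)$ for one orientation and $\varphi(C)^{-1} = \overline{\varphi(C)}$ for the reverse, so the two orientations of $C$ together contribute $\varphi(C) + \overline{\varphi(C)} = 2\Re(C)$ to the sum over $\pi$'s realizing $H$. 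Multiplying over the $c(H)$ cycle-components and factoring gives the total contribution of $H$ as $(-1)^{n-p(H)} \, 2^{c(H)} \prod_{C \in \mathcal{C}(H)} \Re(C)$, and summing over $H \in \mathcal{H}(G)$ yields the claimed formula. Since this is exactly the content of \cite[Corollary 2.3]{Ger} with the group taken to be $(\mathbb{T}, \cdot)$, one could alternatively just cite it; I would present the short self-contained argument above and remark that it is the $\mathbb{T}$-specialization of \cite{Ger}.

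The main subtlety — though not really an obstacle — is verifying that the two orientations of a cycle $C$ in $H$ genuinely give complex-conjugate contributions and that this interacts correctly with the Hermitian structure: one must check that reversing the cyclic order $v_{i_1} v_{i_2} \cdots v_{i_\ell}$ to $v_{i_1} v_{i_\ell} \cdots v_{i_2}$ replaces $\varphi(e_{i_1 i_2}) \cdots \varphi(e_{i_\ell i_1})$ by $\varphi(e_{i_1 i_\ell}) \cdots \varphi(e_{i_2 i_1})$, which by $\varphi(e_{ji}) = \overline{\varphi(e_{ij})}$ is the complex conjugate of the original product; hence $\Re(C)$ is well defined independently of orientation, consistent with the definition already recorded in the paper. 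A second minor point is the edge case $\ell = 2$ versus $\ell \geq 3$: a $2$-cycle has only one ``orientation'' as a transposition, so it must not be double-counted, which is why $K_2$-components carry no factor of $2$ while genuine cycles ($\ell \geq 3$) contribute the factor $2$ hidden inside $2^{c(H)}$ together with $\Re(C)$ rather than $\varphi(C)$. Once these two points are nailed down, the rest is the routine regrouping of the Leibniz sum described above.
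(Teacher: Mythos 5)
Your argument is correct and is essentially the route the paper takes: the paper does not write out a proof of this identity but obtains it by specializing \cite[Corollary 2.3]{Ger} to the group $\mathbb{T}$, exactly as you note as an alternative, and the self-contained Leibniz-expansion argument you supply is the same standard Harary--Sachs regrouping that the paper does write out for the permanental counterpart (Theorem \ref{per1}), with the only addition being the sign bookkeeping $\operatorname{sgn}(\pi)=(-1)^{n-p(H)}$. Your handling of the two subtleties (conjugate contributions of the two orientations of a cycle, and the non-doubling of $K_2$-components) is accurate, so nothing is missing.
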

\begin{corollary}
    \label{cor1}Let $\Phi$ be any $\mathbb{T}$-gain graph with the underlying graph $G$. Let $P_\Phi(x)=x^n+a_1x^{n-1}+\cdots+a_n$ be the characteristics polynomial of $\Phi$. Then
    $$a_i=\sum_{H\in\mathcal{H}_i(G)}(-1)^{p(H)}2^{c(H)}\prod_{C\in \mathcal{C}(H)}\Re(C),$$
    where $\mathcal{H}_i(G)$ is the set of elementary subgraphs of $G$ with $i$ vertices.
\end{corollary}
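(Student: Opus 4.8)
The plan is to combine the classical expansion of the characteristic polynomial as a signed sum of principal minors with Theorem~\ref{det1}, applied to each principal submatrix of $A(\Phi)$. Recall that for any $n\times n$ complex matrix $B$ one has $\det(xI-B)=\sum_{i=0}^{n}(-1)^{i}E_i(B)\,x^{n-i}$, where $E_i(B)=\sum_{S\subseteq\{1,\dots,n\},\,|S|=i}\det B[S]$ is the sum of all $i\times i$ principal minors of $B$ and $B[S]$ denotes the submatrix with rows and columns indexed by $S$. Comparing coefficients with $P_\Phi(x)=x^n+a_1x^{n-1}+\cdots+a_n$, I would first record the identity
\[
a_i=(-1)^{i}\sum_{S\subseteq V(G),\,|S|=i}\det A(\Phi)[S].
\]

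Next I would note that, for a fixed vertex subset $S$ with $|S|=i$, the principal submatrix $A(\Phi)[S]$ is exactly the adjacency matrix of the $\mathbb{T}$-gain graph $(G[S],\varphi|_S)$, where $G[S]$ is the induced subgraph on $S$ and $\varphi|_S$ is the restriction of $\varphi$ to the oriented edges lying in $G[S]$. Applying Theorem~\ref{det1} to this gain graph on $i$ vertices gives
\[
\det A(\Phi)[S]=\sum_{H\in\mathcal{H}(G[S])}(-1)^{\,i-p(H)}2^{c(H)}\prod_{C\in\mathcal{C}(H)}\Re(C),
\]
where the cycles $C$ are cycles of $G$ carrying the gains inherited from $\varphi$, so that $\Re(C)$ is well defined and matches the quantity in the statement.

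It then remains to reorganise the double sum $\sum_{|S|=i}\ \sum_{H\in\mathcal{H}(G[S])}$ into a single sum over $\mathcal{H}_i(G)$. The key combinatorial observation is a bijection: since every component of an elementary graph is a $K_2$ or a cycle, each vertex of an elementary subgraph has degree at least one; hence an elementary subgraph $H$ of $G$ with exactly $i$ vertices has a uniquely determined vertex set $S:=V(H)$ with $|S|=i$, and $H$ is then a spanning elementary subgraph of $G[S]$, while conversely every spanning elementary subgraph of $G[S]$ is such an $H$. Therefore $\sum_{|S|=i}\sum_{H\in\mathcal{H}(G[S])}(\cdots)=\sum_{H\in\mathcal{H}_i(G)}(\cdots)$, with $p(H)$, $c(H)$ and the values $\Re(C)$ unchanged. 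Substituting into the displayed formula for $a_i$ and simplifying the sign via $(-1)^{i}\cdot(-1)^{\,i-p(H)}=(-1)^{2i-p(H)}=(-1)^{p(H)}$ yields
\[
a_i=\sum_{H\in\mathcal{H}_i(G)}(-1)^{p(H)}2^{c(H)}\prod_{C\in\mathcal{C}(H)}\Re(C),
\]
as required.

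The only genuine point requiring care is the reindexing in the last paragraph: one must check that the correspondence between pairs $(S,H)$ and elementary subgraphs $H\in\mathcal{H}_i(G)$ is exactly one-to-one, and that the invariants $p(H)$, $c(H)$ and $\Re(C)$ are computed identically whether $H$ is regarded inside $G[S]$ or inside $G$. Everything else is a direct appeal to Theorem~\ref{det1} and the standard principal-minor expansion of $\det(xI-A(\Phi))$, so I expect no substantial obstacle.
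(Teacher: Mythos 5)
Your proof is correct and follows essentially the same route as the paper: the paper's own argument is just the two-line observation that $a_i=(-1)^i\sum(i\times i\text{ principal minors})$ combined with Theorem~\ref{det1}, and you have simply filled in the details (the identification of each principal submatrix with an induced gain subgraph, the reindexing of the double sum over pairs $(S,H)$, and the sign simplification $(-1)^{i}(-1)^{i-p(H)}=(-1)^{p(H)}$), all of which check out.
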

\begin{proof}
    We have $$a_i=(-1)^i\sum i\times i \text{ principal minors}.$$
    Now, the result follows from Theorem \ref{det1}.
\end{proof}

In the next theorem, we compute coefficients of the permanental polynomial of the gain graphs. This result is an extension of the well known Harary's theorem in the context of permanents to the $\mathbb{T}$-gain graphs.  The proof is similar to that of \cite[Theorem 2.3.2]{Cve1} and \cite{har1}. For the sake of completeness we include a proof here.

\begin{theorem}
    \label{per1}
    Let $\Phi$ be a $\mathbb{T}$-gain graph with the underlying graph $G$. Then
    \begin{equation}
    \text{per } A(\Phi)=\sum_{H\in\mathcal{H}(G)}2^{c(H)}\prod_{C\in \mathcal{C}(H)}\Re(C),
    \end{equation}
    where $c(H)$ is the number of cycles in $H$.
\end{theorem}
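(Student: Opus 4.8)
The plan is to expand $\text{per } A(\Phi)$ directly from the definition of the permanent as a sum over the symmetric group, and then to group the permutations according to their cycle structure, mirroring the classical proof of Harary's theorem (as in \cite[Theorem 2.3.2]{Cve1}) while carrying the gains along.

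First I would write
$$\text{per } A(\Phi) = \sum_{\pi \in S_n} \prod_{i=1}^{n} a_{i\pi(i)},$$
and decompose each $\pi$ into its disjoint cycles. Since $a_{ii}=0$, only fixed-point-free permutations contribute. If $\pi$ contains a cycle $(i_1\,i_2\,\cdots\,i_k)$, then the corresponding factor of $\prod_i a_{i\pi(i)}$ is $a_{i_1 i_2}a_{i_2 i_3}\cdots a_{i_k i_1}$, which is nonzero precisely when $v_{i_1}\sim v_{i_2}\sim\cdots\sim v_{i_k}\sim v_{i_1}$ is a closed walk in $G$; for $k=2$ this just says $v_{i_1}\sim v_{i_2}$, and for $k\ge 3$ it forces a genuine cycle of $G$. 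Hence a permutation contributes a nonzero term only if each of its transpositions corresponds to an edge of $G$ and each of its longer cycles traces out a cycle of $G$, and such a $\pi$ determines a unique spanning elementary subgraph $H\in\mathcal{H}(G)$ (the $K_2$ components coming from the transpositions, the cycle components from the cycles of length $\ge 3$).

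The next step is the orientation/counting bookkeeping. For a fixed $H\in\mathcal{H}(G)$, the permutations that give rise to $H$ are obtained by choosing an orientation for each of its $c(H)$ cycle components (a $K_2$ component forces a unique transposition), so there are exactly $2^{c(H)}$ of them. For such a $\pi$, each transposition $(i\,j)$ contributes $a_{ij}a_{ji}=\varphi(e_{ij})\overline{\varphi(e_{ij})}=1$, while each cycle $C\in\mathcal{C}(H)$ traversed in the chosen direction contributes the gain of that oriented cycle; summing over the two orientations of $C$ gives $\varphi(C)+\varphi(C)^{-1}=2\,\mathfrak{R}(C)$ since $|\varphi(C)|=1$. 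As the orientations of distinct cycles are chosen independently,
$$\sum_{\pi \leftrightarrow H}\ \prod_{i=1}^{n} a_{i\pi(i)} \;=\; \prod_{C\in\mathcal{C}(H)}\bigl(\varphi(C)+\varphi(C)^{-1}\bigr) \;=\; 2^{c(H)}\prod_{C\in\mathcal{C}(H)}\mathfrak{R}(C),$$
and summing over all $H\in\mathcal{H}(G)$ yields the claimed identity.

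The only real obstacle is making this orientation/counting argument airtight: one must check that every fixed-point-free permutation with a nonzero contribution corresponds to exactly one spanning elementary subgraph, that every $H$ has precisely $2^{c(H)}$ such preimages, and that these preimages recombine — via $z+\bar z = 2\,\mathfrak{R}(z)$ applied cycle by cycle — into $2^{c(H)}\prod_{C}\mathfrak{R}(C)$. It is worth noting, by contrast with the determinant formula of Theorem \ref{det1}, that the absence of signs in the permanent is exactly why no factor $(-1)^{n-p(H)}$ survives here.
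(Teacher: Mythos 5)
Your proposal is correct and follows essentially the same route as the paper's proof: expand the permanent over $S_n$, identify each nonvanishing fixed-point-free permutation with a spanning elementary subgraph, count the $2^{c(H)}$ orientation choices, and combine $\varphi(C)+\varphi(C)^{-1}=2\,\mathfrak{R}(C)$ cycle by cycle. No gaps.
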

\begin{proof}
    We have \begin{equation}\label{det-form}\text{per }A(\Phi)=\sum_{\sigma\in S_n}b_\sigma,\end{equation}
    where $b_\sigma=a_{1\sigma(1)}a_{2\sigma(2)}\ldots a_{n\sigma(n)}$, and $S_n$ denotes the collection of all permutations on the set $\{1,2,\ldots,n\}$. Since $a_{ii}=0$ for all $i=1,2,\ldots,n$, we have $b_{\sigma}\neq 0$ only if $v_i\sim v_{\sigma(i)}$ for all $i=1,2,\ldots,n$. Let $\gamma_1\gamma_2\ldots\gamma_r$ be the cycle decomposition of the permutation $\sigma$, where $\gamma_i$'s are disjoint cycles of length at least two. Thus, the decomposition of $\sigma$ determines an elementary spanning subgraph $H$ of $G$, whenever  $b_\sigma\neq0$. Now, let us calculate the value of $b_\sigma$. For this we consider each $\gamma_i$'s in the decomposition of $\sigma$.

    If $\gamma_i=(jk)$ is a transposition, then $a_{jk}$ and $a_{kj}$ occurs in the expression of $b_\sigma$. Also note that $a_{jk}a_{kj}=1$. If $\gamma_i=(i_1i_2\ldots i_k)$ is a $k$-cycle, then $b_\sigma$ contains $a_{i_1i_2}a_{i_2i_3}\ldots a_{i_ki_1}$. Let $C$ denote the cycle $v_{i_1}v_{i_2}\ldots v_{i_k}v_{i_1}$, then $a_{i_1i_2}a_{i_2i_3}\ldots a_{i_ki_1}=\varphi(C).$  By combining all these possibilities, we get $b_\sigma=\prod_{C\in\mathcal{C}(H)}\varphi(C).$ Let $\gamma_1,\gamma_2,\ldots,\gamma_s$ be the cycles of length at least $3$ in the decomposition of $\sigma$. Now if we replace any of $\gamma_1,\gamma_2,\ldots,\gamma_s$ by $\gamma_1^{-1},\gamma_2^{-1},\ldots,\gamma_s^{-1}$, then the  sign of the obtained permutation is same that of $\sigma$. Let $\sigma'$ be the any of the $2^{c(H)}$ permutations, namely $\gamma_1^{\pm 1}\gamma_2^{\pm 1}\ldots\gamma_s^{\pm1}\gamma_{s+1}\ldots\gamma_r$, which have the same sign that of  $\sigma$. Then the contribution of $b_{\sigma'}$ to the sum (\ref{det-form}) is $\prod_{C\in\mathcal{C}(H)}\varphi(C)^{\pm1}$.\\
    Therefore,
    \begin{eqnarray*}
        \text{per } A(\Phi)&&=\sum_{H\in\mathcal{H}(G)}\prod_{C\in \mathcal{C}(H)}[\varphi(C)+\varphi(C)^{-1}],\\
        &&=\sum_{H\in\mathcal{H}(G)}2^{c(H)}\prod_{C\in \mathcal{C}(H)}\Re(C).
    \end{eqnarray*}
\end{proof}
The following result is an extension of the well known Sach's coefficient theorem (in the context of permanents) to the $\mathbb{T}$-gain graphs.
\begin{corollary}
    \label{cor2}Let $\Phi$ be any $\mathbb{T}$-gain graph with the underlying graph $G$. Let $Q_\Phi(x)=x^n+b_1x^{n-1}+\cdots+b_n$ be the permanental polynomial of $\Phi$. Then
    $$b_i= (-1)^i\sum_{H\in\mathcal{H}_i(G)}2^{c(H)}\prod_{C\in \mathcal{C}(H)}\Re(C),$$
    where $\mathcal{H}_i(G)$ is the set of elementary subgraphs of $G$ with $i$ vertices.
\end{corollary}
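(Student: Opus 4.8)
The plan is to mimic the proof of Corollary~\ref{cor1}: first express the coefficients $b_i$ of the permanental polynomial in terms of principal permanental minors of $A(\Phi)$, and then evaluate those minors using Theorem~\ref{per1}.

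First I would record the permanental analogue of the ``minor expansion'' of a characteristic polynomial. Writing $A = A(\Phi)$ and using that the permanent is a multilinear function of the rows of its argument, I would split each row $j$ of $xI - A$ into the two rows $x\, e_j^{\mathsf T}$ and $-(\text{row } j \text{ of } A)$, and expand $\mathrm{per}(xI-A)$ over all $2^n$ choices. A choice is indexed by the set $S \subseteq \{1,\dots,n\}$ of rows for which the $-A$ part is selected; in the resulting term the rows outside $S$ equal the corresponding standard basis vectors, so in every nonzero summand of the permanent those rows are matched to the equally-indexed columns. Hence that term equals $x^{n-|S|}\,\mathrm{per}\bigl((-A)[S]\bigr) = (-1)^{|S|} x^{n-|S|}\,\mathrm{per}(A[S])$, where $A[S]$ denotes the principal submatrix of $A$ on the index set $S$. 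Collecting the coefficient of $x^{n-i}$ gives
\[
b_i \;=\; (-1)^i \sum_{|S| = i} \mathrm{per}\bigl(A(\Phi)[S]\bigr),
\]
i.e. $b_i = (-1)^i$ times the sum of the $i\times i$ principal permanental minors of $A(\Phi)$, exactly parallel to the identity $a_i = (-1)^i\sum(i\times i\text{ principal minors})$ used in Corollary~\ref{cor1}.

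The second step is to identify each principal permanental minor. For $S \subseteq V(G)$ the submatrix $A(\Phi)[S]$ is precisely the adjacency matrix of the induced $\mathbb{T}$-gain graph $\Phi[S]$ on the underlying induced graph $G[S]$, so Theorem~\ref{per1} applies and yields $\mathrm{per}(A(\Phi)[S]) = \sum_{H \in \mathcal{H}(G[S])} 2^{c(H)} \prod_{C \in \mathcal{C}(H)} \Re(C)$. A spanning elementary subgraph of $G[S]$ is the same thing as an elementary subgraph of $G$ with vertex set exactly $S$; therefore, summing over all $S$ with $|S| = i$, every elementary subgraph $H$ of $G$ on $i$ vertices is counted exactly once (for $S = V(H)$), and we obtain $\sum_{|S|=i} \mathrm{per}(A(\Phi)[S]) = \sum_{H \in \mathcal{H}_i(G)} 2^{c(H)} \prod_{C \in \mathcal{C}(H)} \Re(C)$. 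Substituting this into the displayed formula for $b_i$ finishes the proof.

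There is no serious obstacle here; the only points demanding a little care are the sign bookkeeping in the multilinear expansion (the factor $(-1)^{|S|}$ obtained by pulling the minus signs out of the $|S|$ selected rows) and the observation that $A(\Phi)[S]$ really is the adjacency matrix of a $\mathbb{T}$-gain graph, so that Theorem~\ref{per1} is legitimately applicable to it. One can alternatively bypass the multilinear expansion entirely by quoting the standard fact that the coefficient of $x^{n-i}$ in $\mathrm{per}(xI - A)$ equals $(-1)^i$ times the sum of the $i\times i$ principal permanental minors of $A$, and then proceed exactly as above.
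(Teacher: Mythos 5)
Your proposal is correct and follows exactly the route the paper intends: the paper states this corollary without proof, as the permanental analogue of Corollary \ref{cor1}, whose proof is just the identity $a_i=(-1)^i\sum(i\times i\ \text{principal minors})$ followed by an appeal to the determinant formula. You do the same thing, additionally supplying the multilinear-expansion justification of $b_i=(-1)^i\sum_{|S|=i}\mathrm{per}(A(\Phi)[S])$ and the (correct) observation that $A(\Phi)[S]$ is itself the adjacency matrix of an induced $\mathbb{T}$-gain graph, both of which the paper leaves implicit.
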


Now, let us calculate the characteristic polynomial of certain $\mathbb{T}$-gain graph using the previous results. The following graph is a particular case of class of graphs known as windmill graphs \cite{gut-sci,ani-ranj-amc}.
\begin{example}[Star of triangles]{\rm
    Let $S_m^\Delta$ denote the star with $m$ triangles, that is, the end vertices of $m$ copies of $K_2$ are joined to single vertex (see Figure \ref{fig3}). We label the vertices of $S_m^\Delta$ such that for each $ l\in \{1, \dots, m\}$, $v_1v_{2l}v_{2l+1}v_{1}$ denote a triangle in it. Let $\Phi=(S_m^\Delta,\varphi)$ be a $\mathbb{T}$-gain graph with
    $\varphi(v_1v_{2l}v_{2l+1}v_{1})=e^{i\theta_l},$ for $1\leq l\leq m$. Let
    $\alpha=2[\cos\theta_1+\cos\theta_2+\cdots+\cos\theta_l].$
    Let $P_\Phi(x)=x^n+a_1x^{n-1}+\cdots+a_n$ be the characteristics polynomial of $\Phi$. {We have $a_1=0$, and we calculate $a_i$ for $1\leq i\leq 2m+1$. Since all the triangles in $S_m^\Delta$ shares the vertex $v_1$. Thus any elementary subgraph with even number (say $2l$) of vertices must be a matching of size $l$. A $l$-matching in $S_m^\Delta$ is either a set of $l$ edges of the form $v_{2i}v_{2i+1}$ or a set consisting of an edge of the form $v_1v_{2j}$ (or $v_1v_{2j+1}$) together with $l-1$ edges of the form $v_{2i}v_{2i+1}$, $j\neq i$.

        Thus, by using Corollary \ref{cor1}, we have
        \begin{eqnarray*}
            a_{2l}&=&(-1)^lm_l(S_m^\Delta)\\&=&(-1)^l\bigg[\binom{m}{l}+2m\binom{m-1}{l-1}\bigg],
        \end{eqnarray*}
    where $m_l(S_m^\Delta)$ denote the number of $l$ matchings of $S_m^\Delta$.

        On the other hand an elementary subgraph with odd number (say $2l+1$) of vertices must contain a triangle $v_1v_{2j}v_{2j+1}v_1$ and $l-1$ edges of the form $v_{2i}v_{2i+1}$, $j\neq i$. Therefore}
    $a_{2l+1}=(-1)^l\binom{m-1}{l-1}\alpha.$
    As a special case if $\alpha=0$, then $a_{2l+1}=0$ so in this case eigenvalues of $\Phi$ are symmetric about 0.

In a similar way, we can show that $b_{2l}=\binom{m}{l}+2m\binom{m-1}{l-1}
   $ and $  b_{2l+1}=-\binom{m-1}{l-1}\alpha.$}
\end{example}
\begin{figure}[h]
    \centering
    \includegraphics[height=4.5cm]{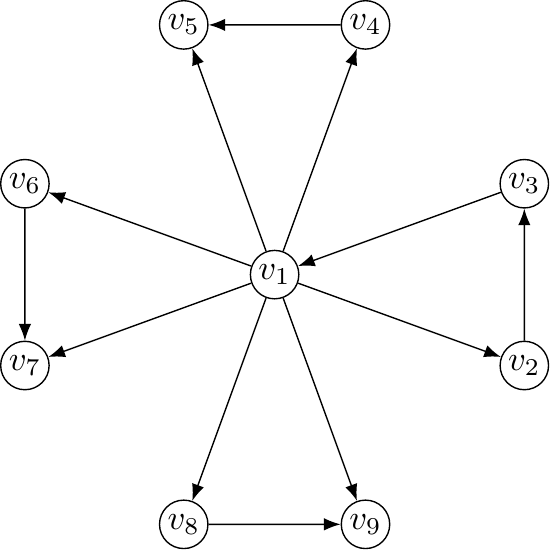}
  \caption{The graph $S^\Delta_4$.} 
   \label{fig3} 
\end{figure}

\begin{example}{\rm
    Let $G=K_4$ and $\varphi$ is taken in such a way that any three vertices $v_{i_1},v_{i_2},v_{i_3}$ with $i_1<i_2<i_3$ we have
    $$\varphi(v_{i_1}v_{i_2}v_{i_3}v_{i_1})=e^{i\theta}.$$
    Then for any cycle $C$ with three vertices we have $\mathfrak{R}(C)=\cos\theta$ and the gain of any cycle which can be written as a product of gains of two cycles of order three. The gains of the cycles of order four are
    \begin{eqnarray*}
        &&\varphi(v_{i_1}v_{i_2}v_{i_3}v_{i_4}v_{i_1})=\varphi(v_{i_1}v_{i_2}v_{i_3}v_{i_1})\varphi(v_{i_1}v_{i_3}v_{i_4}v_{i_1})=e^{2i\theta},\\
        &&\varphi(v_{i_1}v_{i_2}v_{i_4}v_{i_3}v_{i_1})=\varphi(v_{i_1}v_{i_2}v_{i_4}v_{i_1})\varphi(v_{i_1}v_{i_4}v_{i_3}v_{i_1})=1, ~\mbox{and}\\
        &&\varphi(v_{i_1}v_{i_3}v_{i_2}v_{i_4}v_{i_1})=\varphi(v_{i_1}v_{i_3}v_{i_2}v_{i_1})\varphi(v_{i_1}v_{i_2}v_{i_4}v_{i_1})=1.
    \end{eqnarray*}
    Therefore, the characteristic polynomial of $\Phi$ is
    $$P_\Phi(x)=x^4-6x^2-8x\cos\theta+(1-4\cos^2\theta).$$}
\end{example}

Next, we study some of the relationships between the characteristic and permanental polynomials of the adjacency matrix of the  $\mathbb{T}$-gain graph and that of the underlying graph $G$, respectively.

In the following theorem, we observe that, for a tree $G$, the characteristic (resp., the permanental) polynomial of the adjacency matrix and the characteristic polynomial (resp., the permanental) polynomial of any $\mathbb{T}$-gain graph $(G, \phi)$ are the same.
\begin{theorem}
    Let $\Phi=(G,\varphi)$ be a $\mathbb{T}$-gain graph. If G is a tree, then
    \begin{itemize}
        \item[(i)]
        $P_\Phi(x)=P_G(x)$, and
        \item[(ii)]
        $Q_\Phi(x)=Q_G(x)$.
    \end{itemize}
\end{theorem}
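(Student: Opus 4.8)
The plan is to use the coefficient formulas from Corollary \ref{cor1} and Corollary \ref{cor2} together with the structural fact that a tree has no cycles. First I would recall that for a tree $G$, every spanning elementary subgraph $H$ of any vertex-induced subgraph is necessarily a disjoint union of copies of $K_2$, because an elementary graph has components that are either $K_2$'s or cycles, and $G$ (being acyclic) contains no cycle as a subgraph. Consequently, for every $H \in \mathcal{H}_i(G)$ we have $c(H) = 0$ and $\mathcal{C}(H) = \emptyset$, so the cycle-gain product $\prod_{C \in \mathcal{C}(H)} \Re(C)$ is the empty product, equal to $1$, and $2^{c(H)} = 1$.

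With that observation in hand, the two identities fall out immediately. For part (i), Corollary \ref{cor1} gives
$$a_i = \sum_{H \in \mathcal{H}_i(G)} (-1)^{p(H)} 2^{c(H)} \prod_{C \in \mathcal{C}(H)} \Re(C) = \sum_{H \in \mathcal{H}_i(G)} (-1)^{p(H)},$$
and the right-hand side depends only on the underlying graph $G$, not on the gain function $\varphi$; in particular it coincides with the corresponding coefficient of $P_G(x)$, which is the special case $\varphi \equiv 1$. Since this holds for every $i$, and $a_0 = 1$ trivially, we conclude $P_\Phi(x) = P_G(x)$. For part (ii), the same reasoning applied to Corollary \ref{cor2} yields
$$b_i = (-1)^i \sum_{H \in \mathcal{H}_i(G)} 2^{c(H)} \prod_{C \in \mathcal{C}(H)} \Re(C) = (-1)^i \sum_{H \in \mathcal{H}_i(G)} 1 = (-1)^i m_{i/2}(G)$$
when $i$ is even and $b_i = 0$ when $i$ is odd, again a quantity determined entirely by $G$; taking $\varphi \equiv 1$ shows it equals the corresponding coefficient of $Q_G(x)$, whence $Q_\Phi(x) = Q_G(x)$.

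There is essentially no obstacle here: the entire content is the remark that a tree has no elementary subgraph with a cycle, so the gain-dependent factors in the coefficient formulas all collapse to $1$. The only thing to be careful about is the bookkeeping of the empty product and the convention $2^0 = 1$, and noting that $\mathcal{H}_i(G)$ is nonempty only for even $i$ in the tree case (indeed $\mathcal{H}_i(G)$ then consists exactly of the $i/2$-matchings of $G$), but none of this requires real work. An alternative phrasing, which I would mention briefly, is to invoke Theorem \ref{Zas1}: a tree has no cycles, hence is vacuously balanced, so $\Phi \sim (G,1)$, and then Theorem \ref{Th1} gives $\sigma(A(\Phi)) = \sigma(A(G))$ — but this only yields equality of characteristic polynomials (equality of spectra with multiplicities follows since switching equivalence is a similarity), and says nothing directly about permanental polynomials, so the coefficient-formula argument is the cleaner uniform route for both parts.
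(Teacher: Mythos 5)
Your argument is correct and is exactly the paper's approach: the paper's proof is the one-line remark that the result follows from Corollary \ref{cor1} and Corollary \ref{cor2}, and you have simply spelled out why — a tree has no cycles, so every elementary subgraph is a matching and the gain-dependent factors collapse to $1$. Nothing further is needed.
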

\begin{proof}
    The result follows from Corollary \ref{cor1} and Corollary \ref{cor2}.
\end{proof}

For a unicyclic graph $G$, we establish a relationship between the characteristic polynomial of the adjacency matrix of a graph $G$ and the characteristic polynomial of any $\mathbb{T}$-gain graph $(G, \phi)$ in terms of the length of the cycle and the matching number of the graph $G$. 
\begin{theorem}\label{match-gain}
    \label{unic}
    Let $G$ be an unicyclic graph with the cycle $C$ of length $m$. If $\Phi=(G,\varphi)$ be a $\mathbb{T}$-gain graph such that  $\varphi(C)=e^{i\theta}$. If $P_\Phi(x)$ and  $P_G(x)$ denote the characteristic polynomial of $\Phi$ and $G$, respectively, then
    $$P_\Phi(x)=P_G(x)+2(1-\cos \theta)\sum_{i=0}^k(-1)^i{m_i(G-C)}x^{n-m+2i},$$
    where $k$ is the matching number of $G-C.$
\end{theorem}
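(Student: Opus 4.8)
The plan is to invoke the Sachs‑type coefficient formula of Corollary~\ref{cor1} for both $\Phi$ and the trivially gained graph $(G,1)$, and then subtract. Write $P_\Phi(x)=x^n+\sum_{j\ge 1}a_j x^{n-j}$ and $P_G(x)=x^n+\sum_{j\ge 1}\widetilde a_j x^{n-j}$. By Corollary~\ref{cor1},
\[
a_j=\sum_{H\in\mathcal H_j(G)}(-1)^{p(H)}2^{c(H)}\prod_{C'\in\mathcal C(H)}\Re(C'),\qquad
\widetilde a_j=\sum_{H\in\mathcal H_j(G)}(-1)^{p(H)}2^{c(H)},
\]
the second equality holding because every cycle of $(G,1)$ has gain $1$ and hence real part $1$. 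The key structural observation is that, since $G$ is unicyclic with unique cycle $C$, any elementary subgraph $H\in\mathcal H_j(G)$ is of exactly one of two kinds: a matching (no cycle), or the disjoint union of $C$ with a matching $M$ of $G-C$. For the matchings, $\mathcal C(H)=\varnothing$, so the corresponding summands in $a_j$ and $\widetilde a_j$ agree and cancel when we subtract.

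It therefore remains to handle the subgraphs $H=C\cup M$ with $M$ a matching of $G-C$, say $|M|=i$. Such an $H$ has $m+2i$ vertices, so it contributes only to the coefficient of $x^{n-(m+2i)}$; moreover $c(H)=1$, $p(H)=i+1$, and $\prod_{C'\in\mathcal C(H)}\Re(C')=\Re(C)=\cos\theta$ (the last step uses that $\Re(C)$ is orientation‑independent, as recorded before Corollary~\ref{cor1}, and that $\varphi(C)=e^{i\theta}$). Since there are exactly $m_i(G-C)$ matchings of size $i$ in $G-C$ and the cyclic part is always the same $C$, the total contribution of these subgraphs is $(-1)^{i+1}2\cos\theta\,m_i(G-C)$ to $a_{m+2i}$ and $(-1)^{i+1}2\,m_i(G-C)$ to $\widetilde a_{m+2i}$. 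Hence
\[
a_{m+2i}-\widetilde a_{m+2i}=(-1)^{i+1}\,2(\cos\theta-1)\,m_i(G-C)=(-1)^i\,2(1-\cos\theta)\,m_i(G-C),
\]
while $a_j=\widetilde a_j$ for every $j$ not of the form $m+2i$. Summing these differences against $x^{n-m-2i}$ over $i=0,1,\dots,k$ (the range being automatic since $m_i(G-C)=0$ once $i$ exceeds the matching number $k$ of $G-C$) gives
\[
P_\Phi(x)-P_G(x)=2(1-\cos\theta)\sum_{i=0}^{k}(-1)^i m_i(G-C)\,x^{\,n-m-2i},
\]
which is the asserted identity.

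The only step requiring genuine care is the structural dichotomy for elementary subgraphs of a unicyclic graph: one must note that any cycle occurring inside an elementary subgraph of $G$ must be $C$ itself (as $C$ is the only cycle), so at most one cycle appears, and the remaining components, being $K_2$'s vertex‑disjoint from it, constitute a matching of $G-C$. Everything after that is bookkeeping of the exponents $p(H)$ and $c(H)$ together with the identification $\Re(C)=\cos\theta$; there is no analytic content and no nontrivial algebraic manipulation. (A quick sanity check: for $G=C_3$ one has $G-C=\varnothing$, $k=0$, and the formula reduces to $P_\Phi(x)=P_G(x)+2(1-\cos\theta)$, consistent with $\det A(\Phi)=2\cos\theta$ from Theorem~\ref{det1}.)
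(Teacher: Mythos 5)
Your proof is correct and follows essentially the same route as the paper's: both compare the Sachs-type coefficients from Corollary \ref{cor1} for $\Phi$ and $(G,1)$, and both rest on the observation that the only elementary subgraphs producing a difference are the disjoint unions of the unique cycle $C$ with a matching of $G-C$. One remark: your exponent $x^{n-m-2i}$ is the correct one --- the subgraph $C\cup M$ with $|M|=i$ has $m+2i$ vertices and hence perturbs the coefficient of $x^{n-(m+2i)}$ --- so the $x^{n-m+2i}$ appearing in the theorem statement (and in the paper's concluding display) is a typographical sign error, as a check on a triangle with a pendant edge attached confirms.
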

\begin{proof}
    Let $a_i(\Phi)$ and $a_i(G)$ denote the  coefficients of $x^{n-i}$ in $P_\Phi(x)$ and $P_G(x)$, respectively. We have $a_i(\Phi)=a_i(G),$ for all $i<m$, and $a_m(\Phi)=a_i(G)+2(1-\cos\theta).$

    Also note that, for $1\leq i\leq2k$,$$a_{m+i}(\Phi)=\begin{cases}a_{m+i}(G), &\text{if  }i \text{ is odd},\\
    a_{m+i}(G)+(-1)^{\frac{i}{2}}{m_\frac{i}{2}(G- C)}2(1-\cos\theta), &\text{if  }i \text{ is even}.
    \end{cases}$$
    Again $G-C$ has matching number $k$ implies $G$ has no elementary subgraph of order greater than $m+2k$ which contains a cycle. Thus, for all $i>m+2k$, we have
    $a_i(\Phi)=a_i(G).$
    Therefore,
    $$P_\Phi(x)=P_G(x)+2(1-\cos \theta)\sum_{i=0}^k(-1)^i{m_i(G-C)}x^{n-m+2i},$$
    which completes the proof.
\end{proof}
Next theorem is a counterpart of Theorem \ref{match-gain} for the permanental polynomial of a graph. 
\begin{theorem}
    Let $G$ be an unicyclic graph with the cycle $C$ of length $m$. If $\Phi=(G,\varphi)$ is a $\mathbb{T}$-gain graph such that  $\varphi(C)=e^{i\theta}$. If $Q_\Phi(x)$ and  $Q_G(x)$ denote the permanental  polynomial of $\Phi$ and $G$, respectively, then
    $$Q_\Phi(x)=Q_G(x)+(-1)^{m+1}2(1-\cos \theta)\sum_{i=0}^k{m_i(G-C)}x^{n-m+2i},$$
    where $k$ is the matching number of $G-C.$
\end{theorem}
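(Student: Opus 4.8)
The plan is to transcribe the proof of Theorem~\ref{match-gain} almost verbatim, with the permanental coefficient formula Corollary~\ref{cor2} used in place of Corollary~\ref{cor1}. Write $Q_\Phi(x)=x^n+\sum_{i=1}^{n}b_i(\Phi)x^{n-i}$ and $Q_G(x)=x^n+\sum_{i=1}^{n}b_i(G)x^{n-i}$. By Corollary~\ref{cor2},
$$b_i(\Phi)=(-1)^i\sum_{H\in\mathcal{H}_i(G)}2^{c(H)}\prod_{C'\in\mathcal{C}(H)}\Re(C'),$$
and, since every cycle of the all-ones gain graph $(G,1)$ is neutral, $b_i(G)=(-1)^i\sum_{H\in\mathcal{H}_i(G)}2^{c(H)}$. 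Hence $b_i(\Phi)-b_i(G)$ receives contributions only from those spanning elementary subgraphs $H$ of $G$ that contain at least one cycle.

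The structural input is that $G$ is unicyclic: its only cycle is $C$, and $G-C$ is a forest, whose elementary subgraphs are exactly its matchings. Consequently the only elementary subgraph of $G$ carrying a cycle consists of $C$ together with an $i$-matching of $G-C$; such an $H$ satisfies $c(H)=1$ and $\prod_{C'\in\mathcal{C}(H)}\Re(C')=\Re(C)=\cos\theta$, lies in $\mathcal{H}_{m+2i}(G)$, and there are precisely $m_i(G-C)$ of them for $0\le i\le k$, where $k$ is the matching number of $G-C$. The matching-type subgraphs of $G$ contribute identically to $b_\bullet(\Phi)$ and $b_\bullet(G)$, and therefore cancel in the difference.

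It then remains to collect the difference. For every index not of the form $m+2i$ with $0\le i\le k$ one has $b_\bullet(\Phi)=b_\bullet(G)$, whereas for the index $m+2i$,
$$b_{m+2i}(\Phi)-b_{m+2i}(G)=(-1)^{m+2i}\,2(\cos\theta-1)\,m_i(G-C)=(-1)^{m+1}\,2(1-\cos\theta)\,m_i(G-C),$$
using $(-1)^{m+2i}=(-1)^m$. Multiplying by the corresponding power of $x$ and summing over $i$ yields the stated expression for $Q_\Phi(x)-Q_G(x)$.

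I do not anticipate a genuine obstacle; this is a routine adaptation of the preceding characteristic-polynomial argument. The only points requiring care are (a) recording that deleting the unique cycle of a unicyclic graph leaves a forest, so no further cyclic elementary subgraphs arise, and (b) tracking the sign: the factor $(-1)^i$ of Corollary~\ref{cor2} with $i=m+2j$ collapses to $(-1)^m$, and combining this with the replacement of the empty-product value $1$ by $\Re(C)=\cos\theta$ produces the prefactor $(-1)^{m+1}2(1-\cos\theta)$.
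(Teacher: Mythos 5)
Your proof is correct and is precisely the adaptation the paper intends: the paper's own proof of this statement is the single line ``Similar to the proof of Theorem~\ref{unic},'' and you have carried out that adaptation faithfully, using Corollary~\ref{cor2} in place of Corollary~\ref{cor1} and correctly tracking the sign $(-1)^{m+2i}=(-1)^m$. (One small remark: the coefficient $b_{m+2i}$ multiplies $x^{\,n-(m+2i)}=x^{\,n-m-2i}$, so the exponent $n-m+2i$ in the stated formula --- an apparent typo inherited from the characteristic-polynomial version --- should read $n-m-2i$; your derivation, which only says ``the corresponding power of $x$,'' is consistent with the corrected exponent.)
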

\begin{proof}
    Similar to the proof of Theorem \ref{unic}.
\end{proof}
In the following result,  for a unicyclic graph $G$, we establish a relationship between the determinant (resp., permanent) of the adjacency matrix of a graph $G$ and the determinant (resp., permanent) of any $\mathbb{T}$-gain graph $(G, \phi)$.
\begin{corollary}
    Let $G$ be unicyclic and $\Phi$ be any gain graph with the underlying graph $G$. Then
    $$\det A(\Phi)=\begin{cases}
    \det A(G),&\text{if }2k\neq n-m,\\
    \det A(G)+(-1)^{k}2{m_k(G-C)}(1-\cos\theta),&\text{if }2k=n-m,
    \end{cases}$$
    and, similarly,
    $$\text{per} A(\Phi)=\begin{cases}
    \text{per} A(G),&\text{if }2k\neq n-m,\\
    \text{per} A(G)+(-1)^{m+1}2{m_k(G-C)}(1-\cos\theta),&\text{if }2k=n-m.
    \end{cases}$$
\end{corollary}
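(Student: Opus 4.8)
The cleanest plan is to read the corollary off the two unicyclic polynomial identities established just above. For any $M\in\mathbb{C}^{n\times n}$ we have $\det(-M)=(-1)^n\det M$ and $\text{per}(-M)=(-1)^n\text{per}\,M$, so evaluating $P_\Phi(x)=\det(xI-A(\Phi))$ and $Q_\Phi(x)=\text{per}(xI-A(\Phi))$ at $x=0$ gives $\det A(\Phi)=(-1)^nP_\Phi(0)$ and $\text{per}\,A(\Phi)=(-1)^nQ_\Phi(0)$, and likewise for $(G,1)$. Hence it suffices to identify the constant terms of $P_\Phi-P_G$ and of $Q_\Phi-Q_G$, and these are supplied by Theorem~\ref{unic} and its permanental analogue.

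The key observation is that the correction term in each of those identities is supported only on the monomials of degree $n-m, n-m-2,\dots, n-m-2k$: a spanning elementary subgraph using the unique cycle $C$ has $m+2i$ vertices for some $0\le i\le k$ and thus only affects the coefficient of $x^{n-m-2i}$. Therefore this correction can reach the constant term $x^0$ only if $n-m-2i=0$ for some $0\le i\le k$; since the matching number $k$ of $G-C$ obeys $2k\le n-m$, this forces $i=k$ together with $2k=n-m$. If $2k\neq n-m$, then $P_\Phi(0)=P_G(0)$ and $Q_\Phi(0)=Q_G(0)$, which is the first branch of both formulas. If $2k=n-m$, I would extract the $i=k$ term of the correction, multiply by $(-1)^n$, and collapse the resulting power of $-1$ using $n=m+2k$; this yields the second branch.

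An equivalent self-contained route bypasses the polynomial identities and argues directly from Theorem~\ref{det1} and Theorem~\ref{per1}: in a unicyclic graph every spanning elementary subgraph either is a perfect matching of $G$ (contributing a gain-independent term that is identical for $\Phi$ and for $(G,1)$, and so cancels in the difference) or is the cycle $C$ together with a perfect matching of $G-C$ (which exists iff $2k=n-m$, there being $m_k(G-C)$ of them, each with $k+1$ components, exactly one cycle, and cycle-factor $\Re(C)=\cos\theta$). Summing the latter contributions over $\Phi$ and over $(G,1)$ and subtracting gives the claimed expressions. Either way, the one genuinely delicate step — and the place I expect errors to creep in — is the sign bookkeeping: one must correctly combine the global $(-1)^n$ (equivalently the $(-1)^{n-p(H)}$ of Theorem~\ref{det1}), the $(-1)^i$ attached to the $i$ matching edges, and the relation $n=m+2k$, and it is prudent to test the resulting signs against the smallest instance $G=C_3$ (where $k=0$, $m_0(G-C)=1$, and $\det A(\Phi)=\text{per}\,A(\Phi)=2\cos\theta$) before committing to them.
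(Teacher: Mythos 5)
Your plan is the right one and is, as far as one can tell, exactly the derivation the paper intends: the corollary is stated without proof and is meant to be read off Theorem~\ref{unic} and its permanental analogue by evaluating at $x=0$, using $\det A(\Phi)=(-1)^nP_\Phi(0)$ and $\mathrm{per}\,A(\Phi)=(-1)^nQ_\Phi(0)$. Your localization of the correction term to the exponents $n-m,\,n-m-2,\dots,n-m-2k$ (which silently and correctly repairs the misprinted exponent $x^{n-m+2i}$ in Theorem~\ref{unic}), the inequality $2k\le n-m$, and the resulting dichotomy between $2k\neq n-m$ and $2k=n-m$ are all correct; the alternative direct route through Theorems~\ref{det1} and~\ref{per1} is also sound.

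One substantive point: if you execute the sign bookkeeping you rightly flag as delicate, you will not recover the printed formulas. The factor $(-1)^n=(-1)^{m+2k}=(-1)^m$ that you correctly insert turns the coefficient $(-1)^k\,2(1-\cos\theta)m_k(G-C)$ coming from Theorem~\ref{unic} into $(-1)^{m+k}\,2\,m_k(G-C)(1-\cos\theta)$ for the determinant, and turns $(-1)^{m+1}\,2(1-\cos\theta)m_k(G-C)$ into $-2\,m_k(G-C)(1-\cos\theta)$ for the permanent; the corollary as printed has in effect identified $\det A(\Phi)$ with $P_\Phi(0)$ rather than with $(-1)^nP_\Phi(0)$. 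Your own sanity check on $G=C_3$ (where $n=m=3$, $k=0$, $m_0(G-C)=1$) confirms this: there $\det A(\Phi)=\mathrm{per}\,A(\Phi)=2\cos\theta=\det A(G)-2(1-\cos\theta)$, consistent with the signs $(-1)^{m+k}=-1$ and $-1$, whereas the printed signs $(-1)^k=+1$ and $(-1)^{m+1}=+1$ would give $4-2\cos\theta$. So your argument is complete once the signs are carried through; just be aware that doing so faithfully yields a corrected version of the statement (the printed and corrected formulas coincide exactly when $m$ is even).
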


\section{Spectral properties of $\mathbb{T}$-gain graphs}\label{spec-bip}
In this section we study some of the spectral properties of the bipartite $\mathbb{T}$-gain graphs. First, let us establish that for a bipartite $\mathbb{T}$ gain graph $A(\Phi)$, the set of all eigenvalues $\sigma(A(\Phi))$ is symmetric about $0$. Proof of the unweighted case can be found in \cite{Bap-book}.
\begin{theorem}
    If $G$ is bipartite $\mathbb{T}$-gain graph, then the eigenvalues of $A(\Phi)$ are symmetric about $0$.
\end{theorem}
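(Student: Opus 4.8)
The plan is to exploit the block structure that bipartiteness forces on $A(\Phi)$. First I would fix an ordering of the vertices in which all vertices of one part $V_1$ of the bipartition precede all vertices of the other part $V_2$. Since a bipartite graph has no edge with both ends in the same part, $a_{ij}=0$ whenever $v_i,v_j\in V_1$ or $v_i,v_j\in V_2$, so in this ordering
$$A(\Phi)=\begin{pmatrix} 0 & B\\ B^{*} & 0\end{pmatrix},$$
where $B$ is the $|V_1|\times|V_2|$ block recording the gains of the edges joining $V_1$ to $V_2$, and the lower-left block is $B^{*}=\overline{B}^{\,\top}$ because $A(\Phi)$ is Hermitian.

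Next I would introduce the real signature matrix $D=\mathrm{diag}(I_{|V_1|},-I_{|V_2|})$, which satisfies $D=D^{-1}$, and compute the conjugation blockwise:
$$D\,A(\Phi)\,D^{-1}=\begin{pmatrix} 0 & -B\\ -B^{*} & 0\end{pmatrix}=-A(\Phi).$$
Thus $A(\Phi)$ is similar to $-A(\Phi)$, so the two matrices have the same characteristic polynomial; equivalently $\sigma(A(\Phi))=\sigma(-A(\Phi))=-\sigma(A(\Phi))$, and since similarity preserves algebraic multiplicities, $\lambda$ and $-\lambda$ occur with the same multiplicity. That is precisely the assertion that $\sigma(A(\Phi))$ is symmetric about $0$. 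One can also phrase $D$ as a switching matrix with diagonal entries $\pm 1\in\mathbb{T}$: then $-A(\Phi)=A(G,-\varphi)$ is again a $\mathbb{T}$-gain adjacency matrix on $G$, so $(G,\varphi)\sim(G,-\varphi)$ and Theorem \ref{Th1} gives $\sigma(A(\Phi))=\sigma(-A(\Phi))$ at once.

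If an eigenvector-level argument is preferred, from $A(\Phi)(x_1,x_2)^{\top}=\lambda(x_1,x_2)^{\top}$ with $x_j$ indexed by $V_j$ one reads off $Bx_2=\lambda x_1$ and $B^{*}x_1=\lambda x_2$, whence $A(\Phi)(x_1,-x_2)^{\top}=-\lambda(x_1,-x_2)^{\top}$; the involution $x\mapsto Dx$ is then an invertible linear map carrying the $\lambda$-eigenspace onto the $(-\lambda)$-eigenspace. There is no genuine obstacle in this proof: the only points needing care are choosing the vertex ordering so that the diagonal blocks are honestly zero, and reading ``symmetric about $0$'' as an identity of multisets of eigenvalues, which the similarity transformation supplies automatically. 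The argument is the verbatim $\mathbb{T}$-gain analogue of the classical unweighted case, with Hermiticity of $A(\Phi)$ the only extra ingredient.
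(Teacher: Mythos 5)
Your proof is correct and is essentially the paper's argument: the signature-matrix conjugation $DA(\Phi)D^{-1}=-A(\Phi)$ is just the matrix-level packaging of the eigenvector flip $(x_1,x_2)\mapsto(x_1,-x_2)$, which is exactly what the paper does. Your version has the minor virtue of making the preservation of multiplicities explicit via similarity, but the underlying idea is identical.
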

\begin{proof}
    Let $V=\{X,Y\}$ be the bipartition of the vertex set of $G$ such that $|X|=p.$  Let $\lambda$ be an eigenvalue of $A(\Phi)$ and $x=[x_1\ \cdots\ x_p\ x_{p+1}\ \cdots\ x_n]^T$ be a corresponding eigenvector. Then the vector $x'=[x_1\ \cdots\ x_p\ -x_{p+1}\ \cdots\ -x_n]^T$ is non-zero and  satisfies
    $A(\Phi)x'=-\lambda x'.$
    Therefore, $-\lambda$ is also an eigenvalue of $A(\Phi)$.
\end{proof}
\begin{remark}
    \label{rem1}{\rm
    The converse of the above theorem need not be true for gain graphs. Consider the complete graph $K_3$ on three vertices with edge weights are equal to $i$. Then $$A(\Phi)=\left[\begin{array}{ccc}
    0&i&i\\-i&0&i\\-i&-i&0
    \end{array}
    \right].$$
    The eigenvalues of $A(\Phi)$ are $0,\ \pm\sqrt{3}$. But the underlying graph is not bipartite.}
\end{remark}
\begin{remark}
    \label{rem2}{\rm
    It is known that $r$-regular graph has the eigenvalue $r$.  The example in  Remark \ref{rem1} also shows that,  $r$ may not be an eigenvalue for a $r$-regular $\mathbb{T}$-gain graph.}
\end{remark}
It is known that the eigenvalues of $A(G)$ are symmetric about $0$ if and only if $G$ is bipartite (see \cite{Cve1, Bap-book}). But Remark \ref{rem1} shows that this result need not true for $\mathbb{T}$-gain graphs. In the following theorem, we establish a sufficient condition for a $\mathbb{T}$-gain graph  with eigenvalues are  symmetric with respect to origin to be bipartite.
\begin{theorem}
    Let $\Phi=(G,\varphi)$ be a $\mathbb{T}$-gain graph such that the eigenvalues of $A(\Phi)$ are symmetric about $0$. If
    $$\sum_{C\in\mathcal{C}_i(G)}\mathfrak{R}(C)\neq 0,$$ where $\mathcal{C}_i(G)$ denotes the set of all cycles on $i$ vertices, then $G$ is bipartite.
\end{theorem}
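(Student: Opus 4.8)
The plan is to extract the hypothesis from the coefficients of the characteristic polynomial and combine it with Corollary~\ref{cor1}. First I would record what the symmetry assumption gives at the level of $P_\Phi$: writing $P_\Phi(x)=\prod_{k=1}^{n}(x-\lambda_k)=x^n+a_1x^{n-1}+\cdots+a_n$, the fact that $\sigma(A(\Phi))$ is symmetric about $0$ means that the multiset $\{\lambda_k\}$ equals $\{-\lambda_k\}$, so $P_\Phi(-x)=(-1)^nP_\Phi(x)$. Comparing coefficients, $a_j=0$ for every odd $j$.

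Next I would argue by contraposition. Suppose $G$ is not bipartite; then $G$ contains an odd cycle, and I take $i$ to be the length of a shortest odd cycle of $G$ (this is the index for which I will contradict the displayed hypothesis; note that if $\mathcal{C}_i(G)=\emptyset$ the hypothesis is vacuous, the sum being $0$). The key structural observation is that every elementary subgraph $H$ of $G$ on exactly $i$ vertices must be a single cycle of length $i$. Indeed, each component of $H$ is a $K_2$ or a cycle; a $K_2$ and every even cycle have an even number of vertices, whereas every odd cycle of $G$ has at least $i$ vertices by the minimality of $i$. Hence, to account for the odd total $i$, the subgraph $H$ must contain exactly one odd cycle, that cycle must have exactly $i$ vertices, and then no vertices remain for any other component. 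Consequently $\mathcal{H}_i(G)=\mathcal{C}_i(G)$, and for each such $H=C$ we have $p(H)=c(H)=1$.

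Plugging this into Corollary~\ref{cor1} gives
$$a_i=\sum_{H\in\mathcal{H}_i(G)}(-1)^{p(H)}2^{c(H)}\prod_{C\in\mathcal{C}(H)}\mathfrak{R}(C)=\sum_{C\in\mathcal{C}_i(G)}(-1)\cdot 2\cdot\mathfrak{R}(C)=-2\sum_{C\in\mathcal{C}_i(G)}\mathfrak{R}(C).$$
Since $i$ is odd, the first paragraph forces $a_i=0$, hence $\sum_{C\in\mathcal{C}_i(G)}\mathfrak{R}(C)=0$, contradicting the assumption. Therefore $G$ is bipartite.

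The only step needing genuine care is the structural claim that on $i$ vertices, with $i$ the odd girth, the elementary subgraphs are exactly the $i$-cycles: this is where the minimality of $i$ is used essentially, and where one must keep in mind the precise meaning of $\mathcal{H}_i(G)$ (elementary subgraphs with $i$ vertices, not spanning subgraphs). Everything else is sign bookkeeping. As a sanity check on the signs, in the case $i=3$ one can bypass Corollary~\ref{cor1} entirely: $\text{trace }A(\Phi)^3=\sum_k\lambda_k^3=0$ by symmetry, while $\text{trace }A(\Phi)^3=6\sum_{C\in\mathcal{C}_3(G)}\mathfrak{R}(C)$ as computed in Section~\ref{bounds}, yielding the same conclusion.
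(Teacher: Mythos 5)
Your proof is correct and follows essentially the same route as the paper's: kill the odd coefficients of $P_\Phi$ using the spectral symmetry, then identify $a_i=-2\sum_{C\in\mathcal{C}_i(G)}\mathfrak{R}(C)$ for the smallest odd cycle length $i$ via Corollary~\ref{cor1}. The only difference is presentational: you package the argument as a single minimality/contraposition step (shortest odd cycle), whereas the paper runs the same structural observation as an iteration over $i=3,5,\dots$ (``proceeding in this way''); your version makes the key claim that $\mathcal{H}_i(G)=\mathcal{C}_i(G)$ slightly more explicit.
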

\begin{proof}
    It is sufficient to prove that $G$ does not have any odd cycles.
    Let $P_\Phi(x)=x^n+a_1x^{n-1}+\cdots+a_n$ be the characteristics polynomial of $A(\Phi)$. Since, the eigenvalues of $A(\Phi)$ are symmetric about $0$, we have $a_i=0$ whenever $i$ is odd.  Now, $a_3=-2\sum_{C\in\mathcal{C}_3(G)}\mathfrak{R}(C)=0,$ and by the assumption $\sum_{C\in\mathcal{C}_3(G)}\mathfrak{R}(C)\neq0.$  Thus $G$ does not have any triangle. Since $G$ does not contain $K_3$, the cycles with $5$ vertices are the only elementary  subgraphs on $5$ vertices. A similar argument shows that $G$ does not have any cycles of length $5$. Proceeding in this way, we can prove $G$ does not contain any odd cycles.
\end{proof}
In the next theorem, we establish an upper bound for the largest eigenvalue of a bipartite graph.
\begin{theorem}
    Let $\Phi$ be a $\mathbb{T}$-gain graph with the underlying graph G. If $G=K_{p,q}$, then $\lambda_1(A(\Phi))\leq \sqrt{pq}$.  Equality holds if and only if $\Phi$ is balanced.
\end{theorem}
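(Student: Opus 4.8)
The plan is to compare $A(\Phi)$ with the adjacency matrix $A(K_{p,q})$ of the underlying graph, using the two comparison results for nonnegative matrices, Theorem \ref{Th0.3a} and Theorem \ref{Th0.3}. The key initial observation is that every edge of $\Phi$ carries a gain of modulus one, so $|A(\Phi)| = A(K_{p,q})$ entrywise; and it is classical that the spectrum of $A(K_{p,q})$ is $\{\sqrt{pq},\,0,\,-\sqrt{pq}\}$, hence $\rho(A(K_{p,q})) = \sqrt{pq}$. Since $A(\Phi)$ is Hermitian its eigenvalues are real, so $\lambda_1(A(\Phi)) \le \rho(A(\Phi))$. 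Applying Theorem \ref{Th0.3a} with $B = |A(\Phi)| = A(K_{p,q})$ gives $\rho(A(\Phi)) \le \rho(|A(\Phi)|) = \rho(A(K_{p,q})) = \sqrt{pq}$, which yields the bound $\lambda_1(A(\Phi)) \le \sqrt{pq}$ at once.

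For the equality statement, the "if" direction is immediate: if $\Phi$ is balanced then $\Phi \sim (G,1)$ by Theorem \ref{Zas1}, so $\sigma(A(\Phi)) = \sigma(A(K_{p,q}))$ by Theorem \ref{Th1}, and in particular $\lambda_1(A(\Phi)) = \lambda_1(A(K_{p,q})) = \sqrt{pq}$. For the converse, suppose $\lambda_1(A(\Phi)) = \sqrt{pq}$. Together with the chain of inequalities above this forces $\rho(A(\Phi)) = \sqrt{pq} = \rho(A(K_{p,q}))$, and moreover $\sqrt{pq}$ is itself an eigenvalue of $A(\Phi)$ of maximum modulus, so in the notation of Theorem \ref{Th0.3} we may take $\theta = 0$. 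Because $p,q \ge 1$, the graph $K_{p,q}$ is connected, so $A(K_{p,q})$ is nonnegative and irreducible, and $A(K_{p,q}) = |A(\Phi)| \ge |A(\Phi)|$. Theorem \ref{Th0.3} with $A = A(K_{p,q})$ and $B = A(\Phi)$ then produces a diagonal unitary matrix $D$ with $A(\Phi) = D\,A(K_{p,q})\,D^{-1}$. By the matrix characterization of switching equivalence, this says precisely $\Phi \sim (K_{p,q},1)$, so $\Phi$ is balanced by Theorem \ref{Zas1}.

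I do not anticipate a serious obstacle; the argument is essentially a packaging of the two Perron--Frobenius comparison theorems. The only mild points requiring care are: noting $p,q\ge 1$ so that $A(K_{p,q})$ is irreducible (otherwise Theorem \ref{Th0.3} does not apply); checking that the maximum-modulus eigenvalue invoked in Theorem \ref{Th0.3} can be chosen with argument zero, which is exactly what the equality $\lambda_1(A(\Phi)) = \rho(A(\Phi)) = \sqrt{pq}$ guarantees; and identifying the unitary diagonal conjugation delivered by Theorem \ref{Th0.3} with switching equivalence to the trivial gain, which is immediate from the definition.
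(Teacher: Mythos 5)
Your proof is correct, but it takes a genuinely different route from the one in the paper. The paper obtains the bound $\lambda_1\le\sqrt{pq}$ from the trace identity $\sum_i\lambda_i^2=\operatorname{trace}A(\Phi)^2=2pq$ together with the symmetry of the spectrum about $0$ (so that $\lambda_1$ and $-\lambda_1$ both contribute, giving $2\lambda_1^2\le 2pq$), and it proves the converse of the equality case combinatorially: if $\Phi$ is unbalanced it locates a shortest unneutral cycle length $2l$, shows the coefficient of $x^{n-2l}$ in $P_\Phi(x)$ differs from that of $P_G(x)$, deduces $\lambda_2>0$, and then concludes $\lambda_1<\sqrt{pq}$ from the same trace identity. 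You instead run the whole argument through the Perron--Frobenius comparison theorems quoted in the preliminaries: Theorem \ref{Th0.3a} with $|A(\Phi)|=A(K_{p,q})$ gives the bound immediately, and Theorem \ref{Th0.3} with $\theta=0$ (which is exactly what $\lambda_1=\rho(A(\Phi))=\sqrt{pq}$ guarantees) produces the diagonal unitary conjugation $A(\Phi)=DA(K_{p,q})D^{-1}$, i.e.\ $\Phi\sim(K_{p,q},1)$, hence balancedness via Theorem \ref{Zas1}. Your argument is in effect the bipartite specialization of the paper's later Theorem \ref{Th0.4}, with the observation that $\lambda_1=\rho$ forces the ``$\Phi$ balanced'' branch rather than the ``$-\Phi$ balanced'' one; it is cleaner and sidesteps the somewhat delicate coefficient computation and the step ``$a_{2l}\neq 0$ implies $\lambda_2>0$'' in the paper's converse, at the cost of invoking the heavier matrix-analytic machinery rather than staying with elementary trace and Sachs-type arguments. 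Your attention to the hypotheses of Theorem \ref{Th0.3} (irreducibility of $A(K_{p,q})$ and the choice of the maximum-modulus eigenvalue with argument zero) is exactly where care is needed, and you handle both points correctly.
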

\begin{proof}
    We have, $\sum\lambda_i^2=\Big{[}\sum\lambda_i\Big{]}^2-2\sum_{i\neq j}\lambda_i\lambda_j=2pq.$
    Since, the eigenvalues of $A(\Phi)$ are symmetric about $0$, we get
    $2\lambda_1^2\leq 2pq,\ \text{ and hence }\ \lambda_1\leq \sqrt{pq}.$
    Now, let us prove the necessary and sufficient condition for the equality. If $\Phi$ is balanced, then $\sigma(A(\Phi))=\sigma(K_{p,q})$, and hence $\lambda_1(A(\Phi))=\sqrt{pq}.$
    Conversely, let $\lambda_1(A(\Phi))=\sqrt{pq}$. Suppose that $\Phi$ is not balanced. Then there exists a smallest number $l\geq 2$ such that $\Phi$ contains cycles $C_1, C_2,\ldots,C_k$ of length $2l$ with $\varphi(C_i)\neq 1.$
    Now computing coefficient of $x^{n-2l}$ in the characteristics polynomial of $\Phi$, we get
    $$ a_{2l}=2k-2\sum_{i=1}^k\mathfrak{R}(C_i) \neq 0.$$
    Thus $\lambda_2>0$, and hence $\lambda_1<\sqrt{pq}$. This contradicts the fact that $\lambda_1=\sqrt{pq}.$ Therefore $\Phi$ must be balanced.
\end{proof}
From Theorem \ref{Th1}, it is known that if two gain graphs are switching equivalent, then they have the same set of eigenvalues. The converse of this statement is not true in general, i.e., if the set of all eigenvalues of two $\mathbb{T}$-gain graphs {(with the same underlying graph $G$)} are the same, then they need not be switching equivalent. Consider $G$ as in the Figure \ref{Fig1}.
    \begin{figure}[h]
        \centering
        \includegraphics[height=4.5cm]{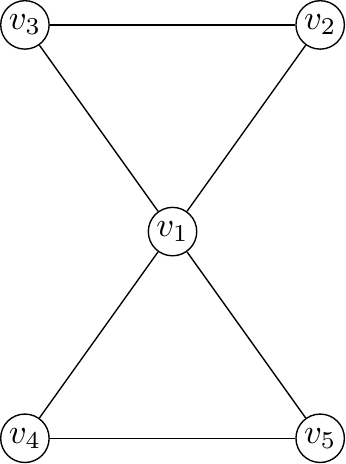}
        \caption{A graph with 5 vertices.}
        \label{Fig1}
    \end{figure}
    The graph G contains two cycles, namely, $C_1=\{v_1,v_2,v_3,v_1\}$ and
    $C_1=\{v_1,v_4,v_5,v_1\}$. We construct $\Phi_1$ and $\Phi_2$ so that
    $$\begin{array}{cc}
    \varphi_1(C_1)=i,&\ \varphi_1(C_2)=1\\ \varphi_2(C_1)=\frac{1+i\sqrt{3}}{2},&\ \varphi_2(C_2)=\varphi_2(C_1)=\frac{1+i\sqrt{3}}{2}.
    \end{array}$$
    Let $a_i(1)$ and $a_i(2)$ denote the coefficient of $x^{n-i}$ in the characteristics polynomial of $\Phi_1$ and $\Phi_2$, respectively. Then, by using Corollary \ref{cor1}, we have
 \begin{eqnarray*}
        &&a_1(1)=a_1(2)=0,\\
        &&a_2(1)=a_2(2)=-6,\\
        &&a_3(1)=a_3(2)=-2,\\
        &&a_4(1)=a_4(2)=1,\\
        &&a_5(1)=a_5(2)=2.
\end{eqnarray*}
Therefore $\sigma(A(\Phi_1))=\sigma(A(\Phi_2))$. But, by Theorem \ref{Th2}, $\Phi_1$ and $\Phi_2$ are not switching equivalent.

In the remaining part of this section, we study when the set of all eigenvalues  or the spectral radius of  $A(\Phi)$, for some $\Phi$, equals to the the set of all eigenvalues  or the spectral radius of the underlying graph, respectively.
% \begin{lemma}\label{lm0.1} \cite{Reff1}
%     Let $\Phi=(G,\varphi)$ be a $\mathbb{T}$-gain graph. Then the following are equivalent:
% \begin{enumerate}[$(1)$]
%     \item $\Phi$ is balanced.
%    \item$\Phi\sim (G,1)$.
%     \item $\varphi$ has a potential function.\\
% \end{enumerate}
% \end{lemma}
%
%
% \begin{lemma}\label{lm0.2} \cite{Reff1}
%     If $\Phi=(G,\varphi)$ is a balanced $\mathbb{T}$-gain graph, then $A(\Phi)$ and $A(G)$ have the same spectrum.\\
% \end{lemma}
\begin{lemma}
    Let $\Phi=(G,\varphi)$ be a $\mathbb{T}$-gain(connected) graph, then $ \rho(A(\Phi))\leq\rho(A(G)) $.
\end{lemma}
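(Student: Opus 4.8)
The plan is to reduce the statement to a direct application of Theorem \ref{Th0.3a}. The key observation is that the matrix $A(\Phi)$ and the $(0,1)$-adjacency matrix $A(G)$ have their nonzero entries in exactly the same positions, and every nonzero entry of $A(\Phi)$ is a complex number of modulus $1$. Hence the entrywise-modulus matrix $|A(\Phi)|$ coincides with $A(G)$: for $v_i \sim v_j$ we have $|a_{ij}| = |\varphi(e_{ij})| = 1$, and $|a_{ij}| = 0$ otherwise. In particular $|A(\Phi)| \le A(G)$ (with equality), and $A(G)$ is a nonnegative matrix.

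With this in hand, I would invoke Theorem \ref{Th0.3a} with the choices $A = A(\Phi)$ and $B = A(G)$: since $B$ is nonnegative and $|A| \le B$, the theorem yields
$$\rho(A(\Phi)) \le \rho(|A(\Phi)|) \le \rho(A(G)),$$
which is exactly the claimed inequality. Since $A(\Phi)$ is Hermitian its spectral radius equals $\max_i |\lambda_i(A(\Phi))|$, and similarly $\rho(A(G))$ is the Perron eigenvalue of $A(G)$, so the inequality can equivalently be read as a bound on the largest absolute value of an eigenvalue of the gain graph by the spectral radius of the underlying graph.

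There is essentially no obstacle here: the proof is a one-line consequence of the comparison theorem for nonnegative matrices once the identity $|A(\Phi)| = A(G)$ is noted. The connectedness hypothesis is not actually needed for this inequality (it holds for any $\mathbb{T}$-gain graph); I expect it is included because it is the natural standing assumption for the subsequent results, where irreducibility of $A(G)$ is genuinely used — for instance, to analyze when equality $\rho(A(\Phi)) = \rho(A(G))$ forces $\Phi$ to be balanced via Theorem \ref{Th0.3}. If desired, one could alternatively give a self-contained argument: take a unit eigenvector $x$ of $A(\Phi)$ with $|\lambda| = \rho(A(\Phi))$, so that $\rho(A(\Phi)) = |x^{*}A(\Phi)x| \le |x|^{\top} |A(\Phi)| \, |x| = |x|^{\top} A(G) |x| \le \rho(A(G))$, the last step using the Rayleigh quotient characterization for the symmetric nonnegative matrix $A(G)$; but citing Theorem \ref{Th0.3a} is cleaner.
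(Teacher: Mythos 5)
Your proof is correct and follows exactly the paper's argument: observe that $|A(\Phi)| = A(G)$ (so in particular $|A(\Phi)| \le A(G)$) and apply Theorem \ref{Th0.3a}. The extra remarks about connectedness and the Rayleigh-quotient alternative are accurate but not needed.
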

\begin{proof}
Since $ |A(\Phi)|\leq A(G) $, then, by  Theorem \ref{Th0.3a}, we have $ \rho(A(\Phi))\leq \rho(A(G))$.
\end{proof}

In the next theorem, we establish an equivalent condition for  $\rho(A(\Phi))=\rho(A(G))$.

\begin{theorem} \label{Th0.4}
   Let $\Phi=(G,\varphi)$ be a $\mathbb{T}$-gain(connected) graph, then $\rho(A(\Phi))=\rho(A(G))$ if and only if either $\Phi$ or $ -\Phi $ is balanced.
\end{theorem}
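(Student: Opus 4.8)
The plan is to establish both directions by converting ``balanced'' into ``switching equivalent to $(G,1)$'' via Theorem~\ref{Zas1}, using the elementary Theorem~\ref{Th1} for the easy implication and the Perron--Frobenius rigidity statement Theorem~\ref{Th0.3} for the substantive one. Throughout I would work with the auxiliary gain graph $-\Phi = (G,-\varphi)$, so the first thing to check is that $-\varphi$ again satisfies $(-\varphi)(e_{ij}) = ((-\varphi)(e_{ji}))^{-1}$, so that $-\Phi$ is a genuine $\mathbb{T}$-gain graph, that $A(-\Phi) = -A(\Phi)$, and hence that $\rho(A(-\Phi)) = \rho(A(\Phi))$. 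It is worth recording, although not strictly needed below, that $-\Phi$ being balanced is the same as $\varphi(C) = (-1)^{|C|}$ for every cycle $C$ of $G$.

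For the direction ($\Leftarrow$): if $\Phi$ is balanced, Theorem~\ref{Zas1} gives $\Phi \sim (G,1)$, so $\sigma(A(\Phi)) = \sigma(A(G))$ by Theorem~\ref{Th1}, and in particular $\rho(A(\Phi)) = \rho(A(G))$. If instead $-\Phi$ is balanced, the same argument applied to $-\Phi$ yields $\rho(A(G)) = \rho(A(-\Phi)) = \rho(A(\Phi))$.

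For the direction ($\Rightarrow$): assume $\rho(A(\Phi)) = \rho(A(G))$. Since $G$ is connected, $A(G)$ is nonnegative and irreducible, and $A(G) = |A(\Phi)|$, so the hypotheses of Theorem~\ref{Th0.3} hold with $A = A(G)$ and $B = A(\Phi)$. As $A(\Phi)$ is Hermitian its spectrum is real; assuming $G$ has an edge (the edgeless case is vacuous, $\Phi$ being trivially balanced), $\rho(A(\Phi)) > 0$ and any maximum-modulus eigenvalue of $A(\Phi)$ equals $\pm\rho(A(\Phi))$, i.e. has the form $e^{i\theta}\rho(A(\Phi))$ with $\theta \in \{0,\pi\}$. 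Applying Theorem~\ref{Th0.3} with such an eigenvalue produces a diagonal unitary matrix $D$ with $A(\Phi) = e^{i\theta} D A(G) D^{-1}$. If $\theta = 0$ then $A(\Phi) = D A(G) D^{-1}$, so $\Phi \sim (G,1)$ and $\Phi$ is balanced by Theorem~\ref{Zas1}. If $\theta = \pi$ then $A(-\Phi) = -A(\Phi) = D A(G) D^{-1}$, so $-\Phi \sim (G,1)$ and $-\Phi$ is balanced. In either case the desired conclusion holds.

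The only nonroutine step is the application of Theorem~\ref{Th0.3}: the care required is in using Hermiticity of $A(\Phi)$ to restrict the unimodular factor $e^{i\theta}$ to $\{1,-1\}$, and then in recognizing $e^{i\pi}$-conjugation of $A(G)$ as the conjugation realizing $-\Phi \sim (G,1)$. Everything else — the basic properties of $-\Phi$, the irreducibility of $A(G)$ coming from connectedness, and the invocations of Theorems~\ref{Zas1} and~\ref{Th1} — is routine bookkeeping.
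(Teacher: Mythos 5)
Your proposal is correct and follows essentially the same route as the paper: the easy direction via balance $\Rightarrow$ switching equivalence to $(G,1)$ $\Rightarrow$ cospectrality, and the substantive direction by applying the Perron--Frobenius rigidity result (Theorem~\ref{Th0.3}) with $A=A(G)$, $B=A(\Phi)$, using Hermiticity to force the unimodular factor into $\{1,-1\}$ and splitting into the cases $\theta=0$ (so $\Phi$ balanced) and $\theta=\pi$ (so $-\Phi$ balanced). Your added checks (that $-\Phi$ is a bona fide gain graph, irreducibility from connectedness, and the trivial edgeless case) are sensible bookkeeping that the paper leaves implicit.
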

\begin{proof}
   If $\Phi$ or $ -\Phi $ is balanced, then $\rho(A(\Phi))=\rho(A(G))$. Conversely, suppose  that $\rho(A(\Phi))=\rho(A(G))$. Let $\lambda_n\leq\lambda_{n-1}\leq \dots \leq\lambda_1$ be the eigenvalues of $A(\Phi)$. Since $A(\Phi)$ is Hermitian,  either $\rho(A(\Phi))=\lambda_1$ or $\rho(A(\Phi))=-\lambda_n$.

  Now we have the following two cases:\\
   \textbf{Case 1:} Suppose that $\rho(A(\Phi))=\lambda_1$. Then, by Theorem \ref{Th0.3}, there is a diagonal unitary matrix $D\in  \mathbb{C}^{n \times n}$ such that  $A(\Phi)=DA(G)D^{-1}$. Hence $\Phi\sim (G,1)$.
   Therefore, by Theorem \ref{Zas1}, $\Phi$ is balanced. \\
   \textbf{Case 2:} If $\rho(A(\Phi))=-\lambda_n$, then $\lambda_n=e^{i \pi} \rho(A(\Phi))$. By Theorem \ref{Th0.3}, we have $A(\Phi)=e^{i \pi}DA(G)D^{-1}$, for some diagonal unitary matrix $D\in  \mathbb{C}^{n \times n}$. Thus $A(-\Phi)=  DA(G)  D^{-1}$. Hence, $(-\Phi)\sim (G,1)$. Thus, $-\Phi$ is balanced.
\end{proof}

\begin{theorem}\label{lm0.5}
     Let $\Phi=(G,\varphi)$ be a $\mathbb{T}$-gain(connected) graph. Then
    \begin{enumerate}
         \item[(i)] If $ G $ is bipartite, then whenever $ \Phi $ is balanced implies $-\Phi $ is balanced.
        \item[(ii)] If $ \Phi $ is balanced implies $ -\Phi $ is balanced for some gain $ \Phi $, then the graph is bipartite.
   \end{enumerate}
\end{theorem}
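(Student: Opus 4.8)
The plan is to reduce both parts to one elementary observation about how negating the gain function affects the gain of a cycle. Write $-\Phi=(G,-\varphi)$, where $(-\varphi)(e_{ij})=-\varphi(e_{ij})$; note this is again a valid $\mathbb{T}$-gain graph and $A(-\Phi)=-A(\Phi)$. For any cycle $C=v_1v_2\cdots v_\ell v_1$ of length $\ell$ in $G$ one has
$$\varphi_{-\Phi}(C)=\prod_{e\in C}\bigl(-\varphi(e)\bigr)=(-1)^\ell\,\varphi(C).$$
Thus the gain of a cycle in $-\Phi$ coincides with the gain of that cycle in $\Phi$ when the cycle has even length, and is its negative when the cycle has odd length. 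Everything follows from this identity.

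For part (i), assume $G$ is bipartite. Then every cycle $C$ of $G$ has even length, so $(-1)^{|C|}=1$ and the displayed identity yields $\varphi_{-\Phi}(C)=\varphi(C)$ for every cycle $C$. Hence $\varphi(C)=1$ for all $C$ if and only if $\varphi_{-\Phi}(C)=1$ for all $C$; that is, $\Phi$ is balanced if and only if $-\Phi$ is balanced. In particular, balance of $\Phi$ implies balance of $-\Phi$.

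For part (ii), the hypothesis provides a gain $\varphi$ for which both $\Phi=(G,\varphi)$ and $-\Phi$ are balanced. (Equivalently, one may take $\varphi\equiv 1$: the trivial gain is always balanced, so the hypothesized implication forces $(G,-1)=-\,(G,1)$ to be balanced as well.) Fix such a $\varphi$ and let $C$ be an arbitrary cycle of $G$. Balance of $\Phi$ gives $\varphi(C)=1$, and balance of $-\Phi$ together with the displayed identity gives $(-1)^{|C|}\varphi(C)=1$; combining these, $(-1)^{|C|}=1$, so $|C|$ is even. Therefore $G$ contains no odd cycle, and hence $G$ is bipartite.

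There is essentially no serious obstacle here; the only points needing a line of justification are the cycle‑gain identity $\varphi_{-\Phi}(C)=(-1)^{|C|}\varphi(C)$ (immediate from $A(-\Phi)=-A(\Phi)$, or from the product definition of the cycle gain) and the standard graph‑theoretic fact that a graph with no odd cycle is bipartite. If one prefers to avoid working with $-\varphi$ directly, part (ii) can instead be argued via switching: a balanced $\Phi$ satisfies $\Phi\sim(G,1)$ by Theorem \ref{Zas1}, switching commutes with negation so $-\Phi\sim(G,-1)$, and by Theorem \ref{Th2} the cycle gains of $-\Phi$ equal those of $(G,-1)$, namely $(-1)^{|C|}$; requiring all of these to equal $1$ again forces every cycle to have even length, i.e.\ $G$ bipartite.
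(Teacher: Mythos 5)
Your proof is correct and follows essentially the same route as the paper's: both parts come down to the observation that negating the gain multiplies the gain of a length-$\ell$ cycle by $(-1)^\ell$, so even cycles are unaffected while odd cycles cannot be simultaneously neutral for $\Phi$ and $-\Phi$. You merely make explicit the identity $\varphi_{-\Phi}(C)=(-1)^{|C|}\varphi(C)$ that the paper's terser argument leaves implicit.
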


\begin{proof}
   (i) Suppose $ G $ is bipartite and $ \Phi $ is balanced. Then due to the absence of odd cycles,  $ -\Phi $ is balanced.

   (ii) Let $ \Phi $ be a balanced cycle such that $-\Phi $ is balanced. Suppose that $ G $ is not bipartite.
    Then, any odd cycle in $G$ can not be  balanced with respect to $ -\Phi $, which contradicts the assumption.  Thus $ G $ must be bipartite.
\end{proof}

In the next theorem, we answer the following problem: which gains  adjacency matrices are cospectral to the adjacency matrix of underlying graph.

\begin{theorem}\label{Th0.6}
       Let $\Phi=(G,\varphi)$ be a $\mathbb{T}$-gain(connected) graph. Then, $ \sigma(A(\Phi)) =\sigma(A(G))$ if and only if $ \Phi $ is balanced.
\end{theorem}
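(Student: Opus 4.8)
The plan is to establish both implications; the forward direction is immediate from results already in hand, and the converse is reduced, via the spectral-radius criterion of Theorem \ref{Th0.4}, to a single genuinely substantive case.

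For the ``if'' direction, suppose $\Phi$ is balanced. Then Theorem \ref{Zas1} gives $\Phi\sim(G,1)$, and Theorem \ref{Th1} yields $\sigma(A(\Phi))=\sigma(A(G))$ at once. For the ``only if'' direction, assume $\sigma(A(\Phi))=\sigma(A(G))$. In particular $\rho(A(\Phi))=\rho(A(G))$, so Theorem \ref{Th0.4} forces either $\Phi$ or $-\Phi$ to be balanced. If $\Phi$ is balanced we are done, so it remains to treat the case in which $-\Phi$ is balanced but $\Phi$ is not yet known to be. Here I would record the elementary identity $A(-\Phi)=-A(\Phi)$ (hence $\sigma(A(-\Phi))=-\sigma(A(\Phi))$) together with $-(-\Phi)=\Phi$. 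Applying the already-proved ``if'' direction to the balanced gain graph $-\Phi$ gives $\sigma(A(-\Phi))=\sigma(A(G))$, so that $\sigma(A(G))=-\sigma(A(\Phi))=-\sigma(A(G))$, using the hypothesis in the last step. Thus the spectrum of $A(G)$ is symmetric about the origin, and by the classical characterization of bipartite graphs in terms of spectral symmetry (cited earlier in this section), $G$ is bipartite. Finally, Theorem \ref{lm0.5}(i) applied to the gain graph $-\Phi$, whose negation is $\Phi$, shows that balancedness of $-\Phi$ on a bipartite underlying graph implies balancedness of $\Phi$, which closes the argument.

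The main obstacle is precisely this last case: one must exploit that the hypothesis controls the \emph{entire} spectrum rather than only $\rho$, so that the relation $\sigma(A(-\Phi))=-\sigma(A(\Phi))$ can be combined with $\sigma(A(\Phi))=\sigma(A(G))$ to force $\sigma(A(G))$ to be symmetric about $0$ and hence $G$ bipartite; it is exactly this bipartiteness that, through Theorem \ref{lm0.5}(i), lets one upgrade ``$-\Phi$ balanced'' to ``$\Phi$ balanced.'' Everything else is bookkeeping once the identities $A(-\Phi)=-A(\Phi)$ and $-(-\Phi)=\Phi$ are in place.
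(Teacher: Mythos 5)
Your proposal is correct and follows essentially the same route as the paper: reduce to the spectral-radius equality, invoke Theorem \ref{Th0.4} to get that $\Phi$ or $-\Phi$ is balanced, and in the latter case use $\sigma(A(G))=\sigma(-A(G))$ to deduce bipartiteness and then Theorem \ref{lm0.5} to upgrade to $\Phi$ balanced. Your write-up merely makes explicit the identities $A(-\Phi)=-A(\Phi)$ and $-(-\Phi)=\Phi$ that the paper leaves implicit.
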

\begin{proof}
    If $ \sigma(A(\Phi)) =\sigma(A(G))$, then $ \rho (A(\Phi))=\rho(A(G))$. Now, by Theorem \ref{Th0.4}, we have either $ \Phi  $ or $ -\Phi $ is balanced. If $\Phi$ is balanced, then we are done.  Suppose that $ -\Phi $ is balanced, then $ -A(G) $ and $ A(\Phi) $ have the same set of eigenvalues. Hence $ \sigma(A(G)) = \sigma(-A(G))$. Thus, we have $ G $  is bipartite. Therefore, by Theorem \ref{lm0.5}, $ \Phi $ is balanced.
\end{proof}

In the next theorem, we derive a characterization for bipartite graphs in terms gains.
\begin{theorem}\label{Th0.7}
        Let $\Phi=(G,\varphi)$ be a $\mathbb{T}$-gain(connected) graph. Then, $ G $ is bipartite if and only if  $ \rho(A(\Phi))=\rho(A(G))$ implies $\sigma(A(\Phi))=\sigma(A(G)) $ for every gain $ \varphi $.
\end{theorem}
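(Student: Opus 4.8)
The plan is to prove the two implications of the biconditional separately, leaning on three facts already in the excerpt: Theorem \ref{Th0.4} (equality of spectral radii $\iff$ $\Phi$ or $-\Phi$ is balanced), Theorem \ref{Th0.6} (equality of spectra $\iff$ $\Phi$ is balanced), and Theorem \ref{lm0.5} (for bipartite $G$, balance of $\Phi$ and balance of $-\Phi$ are linked).

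\textbf{Forward direction.} Assume $G$ is bipartite and let $\varphi$ be an arbitrary gain with $\rho(A(\Phi)) = \rho(A(G))$. By Theorem \ref{Th0.4}, $\Phi$ or $-\Phi$ is balanced. If $\Phi$ is balanced, Theorem \ref{Th0.6} gives $\sigma(A(\Phi)) = \sigma(A(G))$ at once. If instead $-\Phi$ is balanced, then, since $G$ is bipartite, I would apply Theorem \ref{lm0.5}(i) with $-\Phi$ in the role of the balanced gain to conclude that $-(-\Phi) = \Phi$ is balanced as well, and again get $\sigma(A(\Phi)) = \sigma(A(G))$ from Theorem \ref{Th0.6}. (Equivalently, bipartiteness of $G$ forces $\sigma(A(\Phi))$ to be symmetric about $0$, so $\sigma(A(\Phi)) = -\sigma(A(\Phi)) = \sigma(A(-\Phi)) = \sigma(A(G))$.) As $\varphi$ was arbitrary, the implication on the right-hand side holds.

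\textbf{Converse.} I would argue by contraposition: suppose $G$ is not bipartite and exhibit a single gain for which the implication fails. Take $\varphi \equiv -1$ on every edge, so that $A(\Phi) = -A(G)$; then $-\Phi = (G,1)$ is balanced, hence $\rho(A(\Phi)) = \rho(A(G))$ by Theorem \ref{Th0.4}. On the other hand $\Phi$ is not balanced: non-bipartiteness of $G$ yields an odd cycle $C$, and $\varphi(C) = (-1)^{|C|} = -1 \neq 1$, so $C$ is not neutral. By Theorem \ref{Th0.6}, $\sigma(A(\Phi)) \neq \sigma(A(G))$. Thus this $\varphi$ satisfies $\rho(A(\Phi)) = \rho(A(G))$ but not $\sigma(A(\Phi)) = \sigma(A(G))$, contradicting the hypothesis; therefore $G$ must be bipartite.

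The only delicate point — the step I would be most careful about — is the case $-\Phi$ balanced in the forward direction, where bipartiteness of $G$ must genuinely be used (via Theorem \ref{lm0.5}(i), or via symmetry of the spectrum of a bipartite gain graph) to upgrade balance of $-\Phi$ to balance of $\Phi$. Without bipartiteness this step fails, which is exactly what the counterexample in the converse exploits. The remaining arguments are routine invocations of the cited theorems.
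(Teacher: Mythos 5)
Your proof is correct and follows essentially the same route as the paper: the forward direction is the paper's argument verbatim (Theorem \ref{Th0.4} plus Theorem \ref{lm0.5}(i) to upgrade balance of $-\Phi$ to balance of $\Phi$, then Theorem \ref{Th0.6}). For the converse the paper argues directly — assuming the implication, it takes a balanced $\Phi$, deduces via Theorem \ref{Th0.6} that $-\Phi$ is also balanced, and invokes Theorem \ref{lm0.5}(ii) — whereas you contrapose and exhibit the concrete gain $\varphi\equiv-1$; this is the same odd-cycle obstruction, just unpacked explicitly, so the two arguments coincide in substance.
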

\begin{proof}
    Suppose $\rho(A(\Phi))=\rho(A(G))$ implies $\sigma(A(\Phi))=\sigma(A(G)) $  for any gain $ \varphi $. Let $ \Phi $ be balanced. We shall prove that $-\Phi$ is also balanced. By Theorem \ref{Th0.6}, we have $ \sigma(A(\Phi))=\sigma(A(G))$. Thus $ \rho(A(\Phi))=\rho(A(G))$. Also $ \rho(A(\Phi))=\rho(A(-\Phi)) $ implies $ \rho(A(-\Phi))=\rho(A(G)) $. Thus  $\sigma(A(-\Phi))=\sigma(A(G))$, and hence, by Theorem \ref{Th0.6}, $ -\Phi$ is balanced.  Now, by Theorem \ref{lm0.5}, $G$ is bipartite.

    Conversely, let $ G $ be a bipartite graph, and $\Phi$ be such that $ \rho(A(\Phi))=\rho(A(G)) $. Now,  by Theorem \ref{Th0.4} and Theorem \ref{lm0.5}, we  have $ \Phi $ to be balanced. Hence $
     \sigma(A(\Phi))=\sigma(A(G)) $.
\end{proof}

%\begin{remark}
%{\rm   In the above the theorem, the assumption $G$ is bipartite is indispensable.  For, let us consider a cycle of length $ 3 $.
%    \begin{center}
%    \includegraphics[scale= 0.50]{fig1}\\
%    Figure 1: Gain graph $ \Phi $ and its underling graph $ G $
%    \end{center}
%    Here we can see that $ \rho(A(\Phi))=\rho(A(G))= 2 $ but $ \sigma(A(\Phi)) \neq \sigma(A(G)) $.}
%\end{remark}

\section*{Acknowledgment}The authors are thankful to the referee for valuable comments and suggestions. 
Ranjit Mehatari is funded by NPDF (File no.- PDF/2017/001312), SERB, India. M. Rajesh
Kannan thanks the Department of Science and Technology, India, for financial support through
the Early Carrier Research Award (ECR/2017/000643) . Aniruddha Samanta thanks University
Grants Commission(UGC) for financial support in the form of a junior research fellowship.
\bibliographystyle{amsplain}
\bibliography{gain-ref}
\end{document}